\theoremstyle{plain}
\newtheorem{theorem}{Theorem}[section]
\newtheorem{lemma}[theorem]{Lemma}
\newtheorem{proposition}[theorem]{Proposition}
\theoremstyle{definition}
\newtheorem{definition}[theorem]{Definition}
\theoremstyle{remark}
\newtheorem{remark}[theorem]{Remark}
\newcommand{\PsTricks}{./PsTricks}
\newcommand{\vek}[1]{\mathchoice{\displaystyle\boldsymbol{#1}}
{\textstyle\boldsymbol{#1}}{\scriptstyle\boldsymbol{#1}}
{\scriptscriptstyle\boldsymbol{#1}}}
\renewcommand{\vec}[1]{{\vek #1}}
\newcommand{\ten}[1]{\tensor[]{ #1}{}}
\def\norm#1{  \left\| #1 \right\|}
\def\grad{\nabla}
\def\scalar#1#2{ \left< #1 , #2 \right>}
\begin{document}



\title{On meeting Energy Balance Errors in Cosimulations}

\author{Thilo Moshagen$^{\rm a}$ $^{\ast}$\thanks{$^\ast$Corresponding author. Email: t.moshagen@tu-bs.de
\vspace{6pt}} \\ $^{a}${\em{Institut f\"{u}r Wiss. Rechnen,  TU Braunschweig, Germany}};}

\maketitle
\begin{abstract}                
In engineering, it is a common desire to couple existing simulation tools together into one big system by passing  information from subsystems as parameters into the subsystems under influence. As executed at fixed time points, this data exchange gives the global method a strong explicite component, 
and as flows of conserved quantities are passed  across subsystem boundaries, it is not  ensured that systemwide balances are fulfilled:   the system is not solved as one single equation system. 
These \emph{balance errors} can accumulate and make simulation results inaccurate. Use of  higher-order extrapolation in exchanged data can reduce this problem but cannot solve it.\\
The remaining balance error has been handled  in past work with balance correction methods which compensate these errors by adding corrections for the balances to the signal in next coupling time step. Further past work combined smooth extrapolation of exchanged data and balance correction.
This gives rise to the problem that establishing balance of one quantity \emph{a posteriori} due to the time delay in general cannot establish or even disturbs the balances of quantities that depend on the exchanged quantities, usually energy. In this work, a method is suggested  which allows to choose the quantity that should be balanced to be that energy, and to accurately balance it.
\end{abstract}

\paragraph{Keywords:}
Cosimulation, coupled problems, simulator coupling, explicit coupling, stability, convergence,  balance correction

\section{Introduction}
\label{sec-1}

Engineers are increasingly relying on numerical simulation techniques. Models and 
simulation tools for various physical problems have come into existence in the past 
decades. The desire to simulate a system that consists of well described and treated 
subsystems by using appropriate solvers for each subsystem and letting them exchange the data that forms the mutual 
influence is immanent. \\ 
The situation usually is described by two coupled differential-algebraic systems $S_1$ and $S_2$ that together form a system $S$:
\begin{align}
S_1: \quad \nonumber \\ 
\dot{\vek x}_ 1 &= \vek f_1(\vek x_1,\vek x_2,\vek z_1, \vek z_2)\\
0 &= \vek g_1(\vek x_1, \vek x_2, \vek z_1, \vek z_2) \\
S_2: \nonumber \quad\\
\dot{\vek x}_2  &= \vek f_2(\vek x_1,\vek x_2,\vek z_1, \vek z_2)\\
0 &= \vek g_2(\vek x_1, \vek x_2, \vek z_1, \vek z_2). 
\end{align}
The $(\vek x_1,\vek x_2)$ are the differential states of $S$, their splitting into $\vek{ x}_i$ determines the subsystems $S_i$ together with the choices of the $\vek{z}_i$.
In \emph{Co-Simulation} the immediate mutual influence of subsystems 
 is  replaced by exchanging data at fixed time points 
 and subsystems
are solved separately and parallely but using the received parameter:
\begin{align}
S_1: \nonumber \quad\\
\dot{\vek x}_ 1 &= \vek f_1(\vek x_1, \vek z_1, \vek u_{12})\label{split1} \\ 
0 &= \vek g_1(\vek x_1,  \vek z_1, \vek u_{12}) \label{splitAlgebraic1} \\ 
S_2:\quad \nonumber \\
\dot{\vek x}_2 &= \vek f_2(\vek x_2,\vek z_2, \vek u_{21})\\
0 &= \vek g_2(\vek x_2, \vek z_2,  \vek u_{21})  \label{splitLast}
\end{align}
where $\vek u_i$ are given by coupling conditions that have to be fulfilled at exchange times $T_k$ 
\begin{align}
\vek 0 &=\vek h_{21}(\vek x_1, \vek z_1, \vek u_{21})\label{coupling1}  \\ 
\vek 0 &=\vek h_{12}(\vek x_2, \vek z_2, \vek u_{12})  \label{coupling2}
\end{align}
and are not dependent on subsystem $i$'s states any more, so are mere parameters between exchange time steps.\\
Full row rank of $d_{\vek z_i} \vek g_i$ can be assumed, such that the differential-algebraic systems are of index 1. This description of the setting is widespread (\cite{ArnoldGuenther2001}). 
With the $\vek h_{ij}$ being solved for $\vek u_{ij}$ inside the $S_j$ (let solvability be given), for systems with more than two subsystems it is more convenient to write  output variable $\vec y_j$  
and now redefine $\vek u_{ij}$ as the input of $S_i$, consisting of some components of the outputs $\vek y_j$ 
\cite{ArnoldClaussSchierz2013}. This structure is defined as kind of a standard for connecting simulators for cosimulation  by the \emph{Functional Mockup Interface} Standard \cite{FMI}. It defines clearly what information a subsystems implementation provides. \\ 
In 
Co-Simulation the variables establishing the mutual influence of subsystems 
 are exchanged at fixed time points.
 This results in continuous variables being approximated by piecewise constant
  extrapolation, as shown in the following picture: \\
\begin{figure}[h!tb]
\begin{center}
\resizebox{0.5 \textwidth}{!}{
\includegraphics{\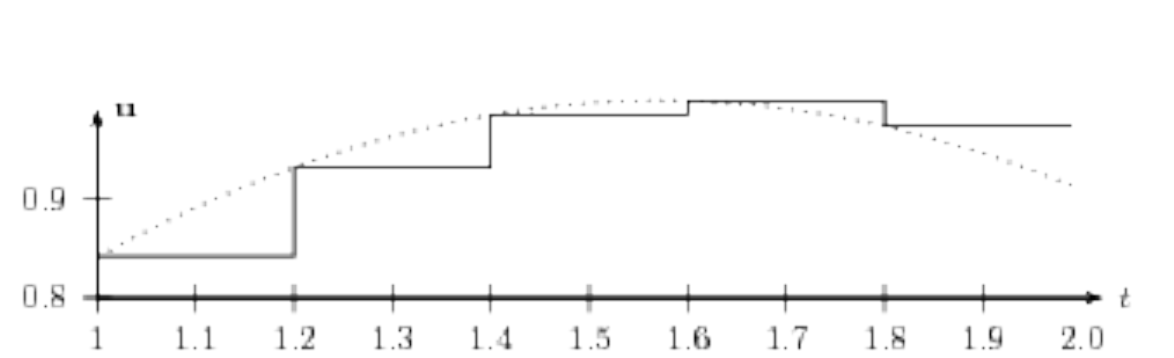}
}\end{center}
\caption{Constant extrapolation of an input signal}
\end{figure}

If one does not want to iterate on those inputs by restarting the simulations using the newly calculated inputs, one just proceeds to the next timestep. \\
This gives the calculations an explicite component, the mutual influence is now not immediate any more, inducing the typical stability problems, besides the
  approximation errors.\\
 \begin{figure}[h!tb]
\begin{center}
\resizebox{0.7\textwidth}{!}{
\includegraphics{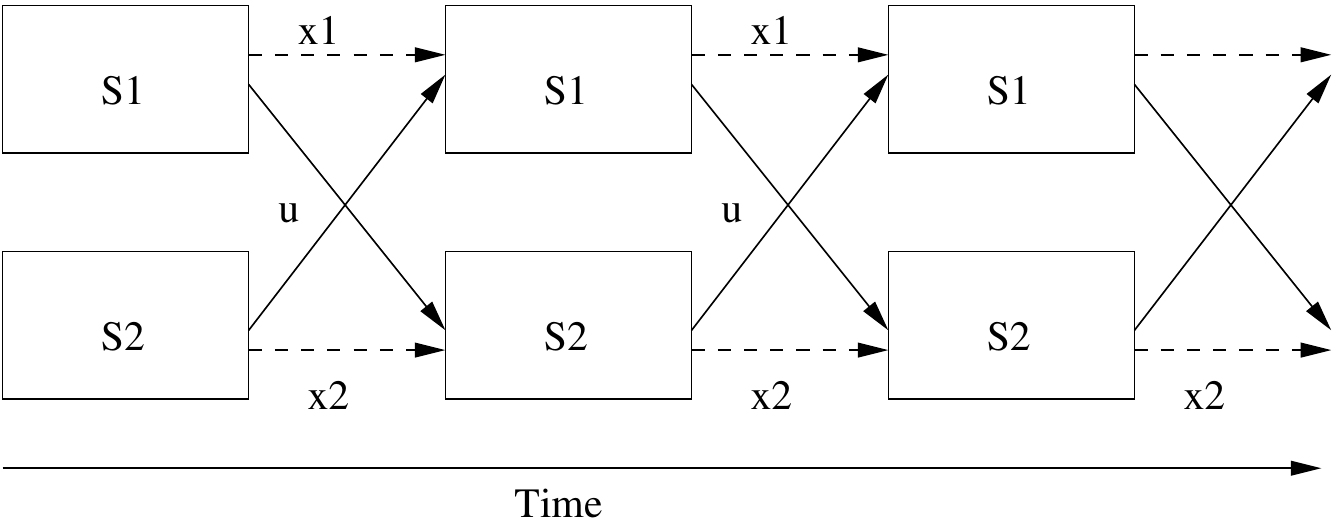}
}\end{center}
\caption{Explicit Cosimulation Scheme}
\end{figure}
But for good reasons, explicit co-simulation is a widely used method: 
It allows to put  separate submodels, for each of which a solver exists, together into one system  and simulate that system by simulating each subsystem with its   specialised solver - examination of mutual influence becomes possible without rewriting everything into one system, and   simulation speed benefits from the parallel calculation of the submodels. Usually it is highly desirable that a simulation scheme does not require repeating of exchange time intervals or iteration, as for many comercial simulation tools this would  already require  too deep intrusion into the subsystems method and too much programming in the coupling algorithm.\\

The following fields of work on explicite co-simulation can be named to be the ones of most interest:
\begin{enumerate}
\item Improvement of the approximation  of the exchanged data will most often improve simulation results \cite{Busch2012}. This  is usually done by \emph{higher-order extrapolation} of exchanged data, as shown in this plot, where  the function plotted with dots is linearly extrapolated:\\ 
\begin{figure}[h!tb]
\begin{center}
\resizebox{0.5 \textwidth}{!}{
\includegraphics{\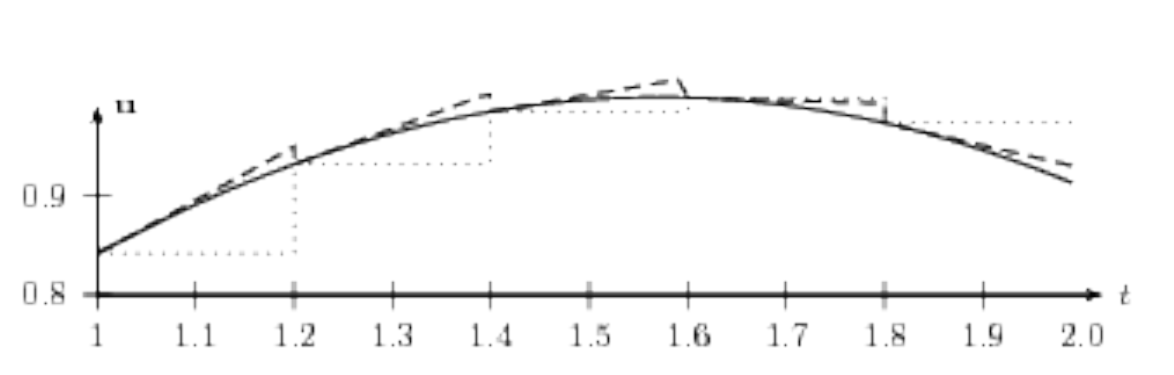}
}\end{center}
\caption{Linear extrapolation of an input signal}
\end{figure}
%

\item When the mutual influence between subsystems consists of flow of conserved quantities like mass or energy, it turns out that the improvement of the approximation of this influence by extrapolation of past data is not sufficient to establish the conservation of those quantities with the necessary accuracy.
The error that arises from the error in exchange adds up over time and
 becomes obvious (and lethal to simulation results many times). 
 In a cooling cycle example (\cite[Section 6.3]{KosselDiss}),  a gain of 1.25\% in coolant mass occurs when simulating a common situation.
\begin{figure}[h!tb]
\begin{center}
\resizebox{0.5 \textwidth}{!}{
\includegraphics{\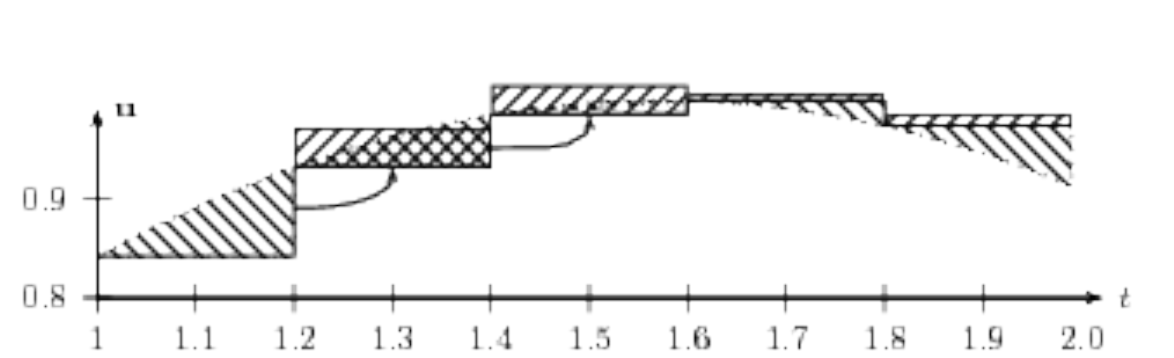}
}
\end{center}
\caption{Constant extrapolation of an input signal, balance error and its recontribution}
\end{figure}
It has been tried  to meet this challenge by   passing the amount of exchanged quantity for the past timestep along with the actual flow on to the receiving system, where then the error that has just been  commited is calculated  and added to the current flow to compensate the past error. For well damped example problems in fluid circles this method has fulfilled the expectations \cite{Scharff12}. It has been labelled \emph{balance correction}. 
\item There is good reason to prevent jumps in exchanged data by \emph{smoothing}. Higher order extrapolation polynomials cannot make extrapolated  data at the end of the exchange timestep match the newly given value.
\end{enumerate}

While under all circumstances it is desirable to guess the influence
information at the subsystem boundaries from past data as well as
possible, balance correction techniques bear the profound problem that
for establishing conservation \emph{ a posteriori} over the whole time
an instantanious error in the exchanged data has to be
accepted. More precisely, balance correction means making an error in 
the exchanged signal $u$,      for lowering the accumulated error in the amount of  that quantity  and thus lowering the error in states $x$ that would be caused by this 
now persistent error in amount.
 The purposefully commited error which the refeed of past errors actually is of 
 course bears an  error in the derivatives of the exchanged data in it, which might cause dynamics in the receiving system and its neighbours, especially if subsystems act on quicker time scales. 
  Our past work \cite{ScharffMoshagen2017}  lowered the error in the
derivatives by construction and use of suitable functions for smoothing
during switching and adding of correction terms.\\
\\

Although in  \cite{ArnoldGuenther2001} and \cite{Moshagen2017} the convergence of explicite cosimulation methods for ODE and index one PDE was proven, and
the result in  \cite{Moshagen2017} was easily extended to balance correction methods, which thus are proven justified --  it can be easily shown that those methods are not stable, neither with balance correction \cite{Moshagen2017}. This is the most severe restriction to explicite simulator coupling. 

From seeing instability as a rise in systems energy as mentioned in \cite[Sec.4.2.2]{Moshagen2017} and \cite[Sec.3.2]{ScharffMoshagen2017} and considering the power that is given by variables at the subsystems interfaces, in this contribution a stable explicit coupling scheme is derived.


\subsection{Some preliminaries and notation}

Always, a time interval or quantities belonging to one are indexed with the index of its right boundary: $\Delta t_i := [t_{i-1},t_i)$. Indexing of times begins with 0.
Big letters refer to time exchange steps, e.g  $\Delta T_k := [T_{k-1},T_k)$ is an interval between exchange times, whereas above interval might denote a subsystems step. Consistently, $k =1,...,N $, and above $i$ ranges from $1$ to $n$ too.

Let $ \operatorname{Ext}(\vec u)_{ji}^k(t)$ denote the value
of the input variable as it is assumed to be by $S_j$, 
some extrapolation of $\vec u$, 
and    $\overline{  \operatorname{Ext}(\vec u)}_{ji}^k(t)$  be the flow as it is used
for calculation constructed on $\left[ T_{k-1}, T_k\right) $ -- it may be different from $ \operatorname{Ext}(\vec u)_{ji}^k(t)$, for example a smooth combination of  $
  \operatorname{Ext}(\vec u)_{ji}^{k-1}(t)$ and  $\operatorname{Ext}(\vec u)_{ji}^k(t)$. 
The error $\Delta E_{\vek u,k} = 
\int_{T_{k-1}}^{T_k}\vek u_{ji} - \overline{  \operatorname{Ext}(\vec u)}_{ji}^k(t) dt$ is 
the balance error if $\vek u_{ji}$ is a flux of a conserved quantity, but it is defined for arbitrary quantities. 
The expression $(\vek u)_i$ denotes the $i$-th component of the vector $\vek u$, similarly,  $(\vek u)_\mathcal I$ denotes all component of the vector $\vek u$ whose indices are in the set $\mathcal I$.

\subsection{Explicit Simulator Coupling}
In industrial environment, explicit simulator coupling cannot always be avoided. There have been efforts to standardize model interfaces in order to enable coupling into one monolitic system and solve them using one ODE/DAE solver. Such an effort is the FMI standard \cite{FMI}. But those efforts so far have not led to replacement of simulator coupling. This is, of course, due to the numerous given legacy codes, due to the fact that parallelization is also advantageous, especially if subsystems softwares are equipped with solvers that are customized to the problem, among other reasons. For example, including the residual of an FEM equation system into the monolithic global system is not sensible due to the amount of data that has to be passed to the monolitic solver, and the build-in solver usually is highly adapted to the problems needs. The solver for the global system cannot be optimal for all subsystems. \\
So, simulator coupling remains and will remain a field of work. 
\subsubsection{Iterative Methods}
One can repeat the calculation for timestep $k$ using  the inputs $ \vek u_{ji}(t)$ determined from the subsystems states numerical solution $\vek x_\Delta(t)$ just calculated - doing so all at the same time would correspond to Jacobi iteration or \emph{waveform iteration},  doing so one after the other, preferably in the order of the dependency of the systems, would correspond to the Gauss-Seidel-Scheme \cite{Busch2012}.  These schemes usually converge \cite{ArnoldGuenther2001}. One then has to program an external iteration procedure including a convergence criteria, and information exchange now concerns data that is time-dependent everywhere in $T_{i-1}, T_i$. Implementing this requires skills that are frequently not sufficiently available in the environments in question, and sometimes it is computationally cheaper to lower the exchange time step size instead of iterating.
\subsubsection{Making Inputs consistent}
The coupling equations \eqref{coupling1} - \eqref{coupling2} together with the algebraic equations \eqref{splitAlgebraic1} and \eqref{splitLast} of the  subsystems  form a global system of algebraic equations. If the graph of the information flow through the subsystems has no loops and one applies an according Gauss-Seidel scheme, then the exchanged data $ $ is consistent in the sense that the equations \eqref{coupling1}, \eqref{coupling2},  \eqref{splitAlgebraic1} and \eqref{splitLast} are fulfilled - in the general case they are not if one just solves \eqref{coupling1}, \eqref{coupling2} with respect to the input. \\
To avoid those errors, it has been suggested to solve the system  \eqref{coupling1}, \eqref{coupling2},  \eqref{splitAlgebraic1} and \eqref{splitLast} at exchange times. While this is obviously desirable for iterative and explicite schemes, it requires that subsystems solvers provide an  the residual of their algebraic  equations to callers. As mentioned above repeatedly, this need not be the case. Further, this again requires considerable programming effort, including a nonlinear solver call and convergence criteria.
\subsubsection{Trying to tackle some drawbacks in explicit coupling }
Finally, with the reasons given above,  frequently the setting allows only for explicit simulator coupling. The drawbacks of using piecewise explicit extrapolation are: 
\begin{itemize}
\item discontinuity, 
\item disbalance in amounts of conserved inputs but also in quantities depending on them, for example energy. Balance correction as given by Table \ref{tab:cosimSchemeBC}, first suggested in \cite{Scharff12}, provides relief to disbalances in conserved inputs to a degree that it makes simulations possible that would be useless without. \\
Balance correction methods were applied to  nonconserved quantities  in \cite{ScharffMoshagen2017}, as even such quantities have some conservation properties in space and time due to their continuity, and examination will go on here.
\item  high derivatives in signals which are induced by the method, especially the balance correction recontributions, which are added as product of the missed amount and a hat function with integral 1,  can cause high derivatives.
\end{itemize}

In 
 \cite{ScharffMoshagen2017} the issues of balance correction and smoothing of values as well as smoothing of derivatives were considered together, showing ways to minimize unwanted behavior and side effects of balance correction and smoothing.  This was mainly done by extending the recontribution of the correction over several time intervals. But any time delayed correction, as all balance corection methods are,  cannot establish balance in quantities depending on inputs as energy. Translating energy gain into mathematical terminology, we state that nothing can make an explicit method stable.

\subsection{Aim of this work}

The state of the art offers relief for many problems that arise when one extrapolates data during cosimulation, also to errors in balances as consequence of extrapolation errors. But finally, cosimulation is an explicit method and is not  stable (\cite{ScharffMoshagen2017}, \cite{Moshagen2017}), even when balance correction is applied. We see the instability as a rise in energy, which usually is a norm or half-norm for the system, that stems from errors in power acting on subsystems interfaces due to extrapolation errors in the factors of that power.  Even if  balance correction is applied on the factor,   the time delay with which the refeed is applied in general has side effects, as the exchanged quantity then arrives at a time when the systems  states already changed. For example  the force $f$ is a factor of the mechanical energy. If a balance correction method is applied to the impulse $p = \int f \,dt$, it is not established that the energy balance is fulfilled. This is described in Section \ref{classification} and an example is given in Section \ref{exampleAugmentationOfEnergy}.
In  Section \ref{enforcingEnergyBalance} an  exchange scheme is presented that balances exchanged energy by first choosing the power as exchanged variable and then making subsystems agree on which amount of energy should be exchanged, making the algorithm stable.

\section{Exchanging Factors of conserved Quantities}

\subsection{Convergence of cosimulation schemes}
In our preceding work   \cite{Moshagen2017} convergence rates for the standard cosimulation scheme
\begin{table}[h!tb]\label{tab:generalScheme}
\begin{center}
\begin{tabular}{p{0.27\textwidth} | p{0.27\textwidth}} 
\multicolumn{2}{ c }{General scheme}\tabularnewline
\center{$S_1$} & \center{$S_2$}\tabularnewline
\multicolumn{2}{ c }{System States}\tabularnewline
\center{$\vec x_1  $} 		&\center{$\vec x_2 $} \tabularnewline
\multicolumn{2}{ c }{Outputs}\tabularnewline
\center{$\vec u_{21}  $} 		&\center{$\vec u_{12} $} \tabularnewline
\multicolumn{2}{ c }{Inputs}\\
\center{$\vec u_{12}$}	& \center{$\vec u_{21}$}\tabularnewline
\multicolumn{2}{ c }{Equations}\tabularnewline
\center{$\dot {\vec x_1} = \vec f_1(\vec x_1, \operatorname{Ext}(\vec u_{12}))	$} & \center{$\dot{ \vec x_2} = \vec f_2(\vec x_2, \operatorname{Ext}(\vec u_{21}))$}	\\
\end{tabular}\\
\caption{Cosimulation scheme}
\end{center}
\end{table}
as given in table \ref{tab:generalScheme} and before in \cite[Section 3.1]{Moshagen2017} were derived. Shifting the solving of  the equations \eqref{coupling1} ff., $0=\vec h_{ij}(\vek x_j, \vek z_j, \vek u_{ij}) $ to the receiving system establishes that the  exchanged quantities $u_{ij}$ are given by some of the subsystems states, see \cite[Section 3.1]{Moshagen2017}. This restricts generality slightly, as one has to assume $\vek h_{ij}=\vek h_{ij}(\vek x_j, \vek u_{ij} )$ independent of any algebraic variable for that shifting of solving, but it simplifies the analysis.  \\
Furthermore, we have to assume that the system, if DAE, is of order 1, such that the algebraic part can be solved and $\vek u_{ij}$ can be used when evaluating $\vec f_i$, equivalently, that the system can be solved by the state space method \cite[Section 3.1]{Moshagen2017}. Such, arguments from the theory of ODE can be applied. 
This given, for  scheme \ref{tab:generalScheme} the result  \cite[theorem 3.6]{Moshagen2017} was proven: 
\begin{theorem}\label{th:convergence}
Let S be a set of ODE which is split into disjoint subsystems $S_k$ of the shape 
\begin{equation}
\dot{\vec x}_{\mathcal D_k} = f_{\mathcal D_k}(t,\left[{\vec x}_{\mathcal D_k},\vec x_{\mathcal I_k}\right] ),
\end{equation}
the $\mathcal D_k$ and $\mathcal I_k$ denoting index sets. Disjoint subsystems means that the  $\mathcal D_k$ are mutually disjoint and $\vec x_{\cup\mathcal D_k}=\vec x$, and   $\mathcal D_k$ and $\mathcal I_k$ are disjoint.
Let ${T_i}$ be a time grid with width $H$, and
let the inputs  $\vec x_{\mathcal I_k}$ of all subsystems be extrapolated at $T_i$ with polynomial order $P$ and then be solved with an one-step method of order  $p$ and maximal stepwidth $h\le H$. Then for the error of the numerical solution $\vec\epsilon_{\Delta,S}(T_{j+1})$ the estimate 
\begin{equation}\label{convErrBoundingFunction}
\norm{\vec\epsilon_{\Delta,S}(T_{j+1})} 
\le \frac{1}{L}\left(\tilde{\tilde C}h_\Delta^{p}
+CH^{P+1}\right)\left(  e^{L(T_{j+1}-T_0)}-1\right)
\end{equation}
holds.
\end{theorem}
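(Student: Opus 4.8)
The plan is to establish \eqref{convErrBoundingFunction} by a Lady Windermere's fan (discrete Gronwall) argument in which the error built up over one exchange interval $\Delta T_{i+1}=[T_i,T_{i+1}]$ is split into four contributions. First I would fix notation. Let $\vec x$ be the exact solution of $S$, $\vec x_\Delta$ the cosimulation solution, $\vec\epsilon_\Delta(t)=\vec x_\Delta(t)-\vec x(t)$, with $\vec\epsilon_\Delta(T_0)=0$. Let $\operatorname{Ext}_i[\,\cdot\,]$ be the operator that returns, on $\Delta T_{i+1}$, the degree-$P$ extrapolation polynomials built from the stored past values at $T_i,\dots,T_{i-P}$ (which, by the state-based arrangement recalled above, are components of the state argument at those times); let $\vec\psi_i(\vec y,\vec v)$ be the value at $T_{i+1}$ produced by running, subsystem by subsystem, the given order-$p$ one-step method (realised stepwidth $h_\Delta\le h$) on $\dot{\vec w}_{\mathcal D_k}=f_{\mathcal D_k}(t,[\vec w_{\mathcal D_k},\vec v_{\mathcal I_k}])$ from $\vec w(T_i)=\vec y$; and let $\vec\phi_i(\vec y,\vec v)$ be the corresponding \emph{exact} flow. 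Then $\vec x_\Delta(T_{i+1})=\vec\psi_i(\vec x_\Delta(T_i),\operatorname{Ext}_i[\vec x_\Delta])$, whereas $\vec x(T_{i+1})=\vec\phi_i(\vec x(T_i),\vec x_{\mathcal I_\bullet})$ with the \emph{true} input functions. A preliminary a~priori step, valid for $H$ below a threshold, confines all trajectories to a compact neighbourhood of $\vec x$, so that the Lipschitz constant $L$ of the $f_{\mathcal D_k}$ in the state, their Lipschitz constant $L_{\vec v}$ in the input, the bound on $\vec x^{(P+1)}$ and the bound on the derivatives the one-step error depends on (here one uses $f$ to be as smooth as the order $p$ requires) are all uniform.

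\paragraph{The four contributions.}
Inserting $\vec\psi_i(\vec x(T_i),\operatorname{Ext}_i[\vec x_\Delta])$, $\vec\psi_i(\vec x(T_i),\operatorname{Ext}_i[\vec x])$ and $\vec\phi_i(\vec x(T_i),\operatorname{Ext}_i[\vec x])$ as intermediate terms, $\vec\epsilon_\Delta(T_{i+1})$ is written as the sum of (A) the propagation of the already accumulated state error through one numerical macro-step at frozen inputs, (B) the effect of feeding the \emph{erroneous} past numerical values, rather than the exact ones, into the extrapolation, (C) the genuine discretization error of the order-$p$ one-step method over $\Delta T_{i+1}$ with the fixed polynomial input, and (D) the extrapolation (consistency) error of using, in the exact flow, the degree-$P$ extrapolants instead of the true input functions. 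I would bound them in turn. For (A): one-step methods are Lipschitz in the initial value with constant $\le e^{LH}$ (discrete Gronwall over the micro-steps), so $\norm{(A)}\le e^{LH}\norm{\vec\epsilon_\Delta(T_i)}$. For (B): $\vec\psi_i$ is Lipschitz in the input, in the supremum norm on $\Delta T_{i+1}$, with constant $\le L_{\vec v}He^{LH}$, while extrapolation on the uniform exchange grid is stable with a Lebesgue-type constant $\Lambda_P$ depending only on $P$, giving $\norm{(B)}\le L_{\vec v}\Lambda_PHe^{LH}\max_{0\le l\le P}\norm{\vec\epsilon_\Delta(T_{i-l})}$. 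For (C): the input being polynomial and $f$ sufficiently smooth, the auxiliary initial value problem on $\Delta T_{i+1}$ has a $C^{p+1}$ solution, and the classical convergence theorem for order-$p$ one-step methods yields $\norm{(C)}\le\tilde{\tilde C}h_\Delta^{p}(e^{LH}-1)/L$. For (D): the remainder term of polynomial interpolation through the $P+1$ nodes $T_i,\dots,T_{i-P}$ bounds the extrapolation defect on $\Delta T_{i+1}$ by $\le C_{\mathrm{ext}}H^{P+1}$, and pushing it through the Lipschitz-in-input bound of the exact flow gives $\norm{(D)}\le L_{\vec v}C_{\mathrm{ext}}He^{LH}\cdot H^{P+1}$.

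\paragraph{Assembling the recursion.}
To absorb the dependence of (B) on several past error values I would use the monotone envelope $E_i:=\max_{0\le l\le i}\norm{\vec\epsilon_\Delta(T_l)}$. Setting $\hat L:=L+L_{\vec v}\Lambda_P$, so that $e^{LH}(1+L_{\vec v}\Lambda_PH)\le e^{\hat LH}$, the four bounds combine, after merging constants, into
\[ E_{i+1}\ \le\ e^{\hat LH}E_i+\bigl(\tilde{\tilde C}_1h_\Delta^{p}H+C_1H^{P+2}\bigr),\qquad E_0=0 . \]
Solving this linear recursion and using $e^{\hat LH}-1\ge\hat LH$, one gets for step $j+1$
\[ \norm{\vec\epsilon_\Delta(T_{j+1})}\le E_{j+1}\ \le\ \bigl(\tilde{\tilde C}_1h_\Delta^{p}H+C_1H^{P+2}\bigr)\frac{e^{\hat L(T_{j+1}-T_0)}-1}{e^{\hat LH}-1}\ \le\ \frac{1}{\hat L}\bigl(\tilde{\tilde C}_1h_\Delta^{p}+C_1H^{P+1}\bigr)\bigl(e^{\hat L(T_{j+1}-T_0)}-1\bigr), \]
which is \eqref{convErrBoundingFunction} after renaming $\hat L\to L$, $\tilde{\tilde C}_1\to\tilde{\tilde C}$, $C_1\to C$.

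\paragraph{Where the work lies.}
The one genuinely non-routine point is that, unlike in a plain one-step error analysis, a single macro-step depends on \emph{several} past numerical values through the extrapolation, so the error does not obey a two-term scalar recursion; the monotone envelope $E_i$ --- equivalently a windowed (vector) Gronwall inequality --- together with the grid-uniform stability constant $\Lambda_P$ of the extrapolation operator is precisely what makes it collapse to the stated closed form. The rest is bookkeeping of two independent accumulation chains: the local truncation error $O(h_\Delta^{p+1})$ summed over the micro-steps of one macro-step and then over the $O\bigl((T_{j+1}-T_0)/H\bigr)$ macro-steps, against the extrapolation defect $O(H^{P+1})$ integrated over one macro-step and then summed over the macro-steps, organised so that a single factor $\tfrac1L\bigl(e^{L(T_{j+1}-T_0)}-1\bigr)$ comes out; plus the preliminary confinement of the trajectories, which justifies the uniform Lipschitz and derivative constants used throughout.
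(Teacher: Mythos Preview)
Your argument is sound and is the standard route to this kind of estimate: decompose the error over one macro-step into propagated old error, error fed back through the extrapolation of past numerical values, inner one-step error, and extrapolation consistency error, then close a discrete Gronwall recursion. The use of the monotone envelope $E_i=\max_{l\le i}\norm{\vec\epsilon_\Delta(T_l)}$ to collapse the $(P{+}1)$-term dependence in (B) into a two-term scalar recursion is the right device, and the bookkeeping that produces the common factor $\tfrac{1}{\hat L}\bigl(e^{\hat L(T_{j+1}-T_0)}-1\bigr)$ is correct. Two small points you may want to make explicit: the start-up for $i<P$, where fewer than $P{+}1$ past values are available (handled either by lower-order extrapolation or by assuming exact data there, contributing only a bounded term absorbed into the constants); and that your renaming $\hat L\to L$ means the $L$ in the estimate is an effective Lipschitz constant $L+L_{\vec v}\Lambda_P$, not the bare Lipschitz constant of $f$.

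As for comparison with the paper: the present paper does not actually prove this theorem. It is quoted verbatim as \cite[Theorem~3.6]{Moshagen2017} and only used here; no proof is given in this text. Your Lady Windermere's fan argument is exactly the kind of proof one expects for such a result and is consistent with the way the paper invokes it, so there is nothing to object to on the level of approach.
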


The cosimulation scheme with balance correction reads as in Table \ref{tab:cosimSchemeBC},
\begin{table}[h!tb]\label{tab:cosimSchemeBC}
\begin{center}
\begin{tabular}{p{0.37\textwidth} | p{0.37\textwidth}} 
\multicolumn{2}{ c }{Cosimulation  scheme}\tabularnewline
\center{$S_1$} & \center{$S_2$}\tabularnewline
\multicolumn{2}{ c }{System States}\tabularnewline
\center{$\vec x_1  $} 		&\center{$\vec x_2 $} \tabularnewline
\multicolumn{2}{ c }{Outputs}\tabularnewline
\center{$\vec u_{21}  $} 		&\center{$\vec u_{12} $} \tabularnewline
\multicolumn{2}{ c }{Inputs}\\
\center{$\vec u_{12}$}	& \center{$\vec u_{21}$}\tabularnewline
\multicolumn{2}{ c }{Equations}\tabularnewline
\center{$\dot {\vec x_1} = \vec f_1(\vec x_1, \operatorname{Ext}(\vec u_{12})) +\Delta E_1^{j-1}\phi_j (t)	$} & \center{$\dot{ \vec x_2} = \vec f_2(\vec x_2, \operatorname{Ext}(\vec u_{21})) +\Delta E_2^{j-1}\phi_j (t) $}	\\
\end{tabular}\\
\caption{Cosimulation scheme with balance correction}
\end{center}
\end{table}
using upper indices for the time interval and 
\begin{equation}\label{eq:BC}
\Delta E_i^{j} := \int_{T_{j-1}}^{T_j}\vek u_i dt  - \int_{T_{j-1}}^{T_j} \overline{ \vek u}_i dt
\end{equation} 
are
added in the time step $j+1$.  
The correction that is applied in  the $j$-th interval is then
$\Delta E_i^{j-1}\phi_j (t)$.\\
For this method,
\cite[Theorem 4.1]{Moshagen2017} tells that the convergence result theorem \ref{th:convergence} still holds:
\begin{theorem}\label{th:convergenceBC}
Let $S$, $S_k$, ${\vec x}_{\mathcal D_k}$, $\vec x_{\mathcal I_k}$ and ${T_i}$ be as in theorem \ref{th:convergence}, but balance correction contributions be added to the extrapolated variables.
Then the estimate from  theorem \ref{th:convergence} still holds.
\end{theorem}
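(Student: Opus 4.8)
The plan is to re-run the discrete Gronwall argument that underlies Theorem~\ref{th:convergence}, treating the balance-correction term as one more controllably small perturbation of the vector field on each exchange interval.

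First I would recall the structure of the proof of Theorem~\ref{th:convergence}. On each exchange step $\Delta T_{j+1}$ one compares the numerical trajectory with the exact solution of $S$ and splits the defect accrued over that step into (i) the local truncation error of the underlying one-step method, which feeds the $\tilde{\tilde C}h_\Delta^{p}$ term, and (ii) the error caused by feeding each subsystem $S_k$ the extrapolated inputs $\operatorname{Ext}(\vec x_{\mathcal I_k})$ in place of the exact coupling values; by the order-$P$ extrapolation estimate the latter is $O(H^{P+1})$ in sup-norm on $\Delta T_{j+1}$. Summing the per-step defects over the $O(1/H)$ exchange steps and closing the recursion with the global Lipschitz constant $L$ yields the bounding function \eqref{convErrBoundingFunction}.

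Next, observe that the only difference between Table~\ref{tab:cosimSchemeBC} and Table~\ref{tab:generalScheme} is the extra summand $\Delta E_k^{j-1}\phi_j(t)$ on the right-hand side of $S_k$ on the interval $\Delta T_j$ (and if, as in the statement, the correction is instead added to the extrapolated input argument of $\vec f_k$, Lipschitz continuity of $\vec f_k$ in that argument reduces it to the same situation). I would bound this summand in sup-norm: by \eqref{eq:BC}, $\Delta E_k^{j-1}$ is the integral over an interval of width $H$ of the difference between the true output flux and the flux $\overline{\vek u}_k$ actually used, i.e.\ of the (smoothed, possibly already once-corrected) exchange error, hence $O(H^{P+1})$ pointwise, so $\norm{\Delta E_k^{j-1}}\le c_1 H^{P+2}$; and since $\phi_j$ is a fixed hat function with $\int_{\Delta T_j}\phi_j\,dt=1$ on an interval of width $H$, it satisfies $\norm{\phi_j}_\infty\le c_2/H$. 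Thus the correction term is bounded by $c_1 c_2 H^{P+1}$ uniformly in $t$ and $j$ — the same order as the extrapolation error (ii) — while its integral over $\Delta T_j$ equals $\Delta E_k^{j-1}=O(H^{P+2})$, so it adds only an $O(H^{P+2})$ contribution to the defect of exchange step $j$. Because the estimate of $\Delta E_k^{j-1}$ itself refers to the numerical solution, this must be organized as an induction on $j$: assuming \eqref{convErrBoundingFunction} with enlarged constants up to step $j-1$ gives control of $\overline{\vek u}_k$ on $\Delta T_{j-1}$, hence of $\Delta E_k^{j-1}$, hence of the term added on $\Delta T_j$, which then enters the Gronwall step producing the estimate on $\Delta T_j$.

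Finally I would insert this additional per-step perturbation into the same recursion: being of order $H^{P+1}$ in sup-norm (equivalently contributing $O(H^{P+2})$ per step, exactly as the extrapolation error already does), it merely enlarges the coefficient of $H^{P+1}$, so the accumulated error still obeys a bound of the shape $\tfrac1L(\tilde{\tilde C}h_\Delta^{p}+\hat C H^{P+1})(e^{L(T_{j+1}-T_0)}-1)$, which is the estimate of Theorem~\ref{th:convergence} with $C$ replaced by a larger constant $\hat C$. I expect the main obstacle to be precisely this bootstrapping: phrasing the bound on $\Delta E_k^{j-1}$ so that it refers only to error quantities already estimated at earlier steps, and checking that the smoothing/combination defining $\overline{\vek u}_k$ from $\operatorname{Ext}(\vec u)^{k-1}$ and $\operatorname{Ext}(\vec u)^{k}$ does not degrade the pointwise $O(H^{P+1})$ order of the exchange error. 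Once that is settled, the remainder is the verbatim Gronwall estimate of Theorem~\ref{th:convergence}.
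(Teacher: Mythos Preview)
The paper does not supply a proof of this theorem; it merely quotes the result from \cite[Theorem~4.1]{Moshagen2017}. Your plan is sound and is the expected argument: the balance-correction summand $\Delta E_k^{j-1}\phi_j(t)$ is bounded in sup-norm by $c_1c_2 H^{P+1}$ via $\norm{\Delta E_k^{j-1}}\le c_1 H^{P+2}$ (integral over an interval of width $H$ of an $O(H^{P+1})$ exchange error) and $\norm{\phi_j}_\infty\le c_2/H$, so it sits at the same order as the extrapolation defect and, when fed through the discrete Gronwall recursion underlying Theorem~\ref{th:convergence}, only inflates the constant $C$ in \eqref{convErrBoundingFunction}. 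Your caveat about organising the bound on $\Delta E_k^{j-1}$ inductively --- since it refers to the numerical rather than the exact trajectory on $\Delta T_{j-1}$ --- is the right place to be careful, and it is the same bootstrapping already implicit in the proof of Theorem~\ref{th:convergence} (the extrapolation is built from numerical values at past exchange points). Your remark reconciling the two placements of the correction (additive to $\dot{\vec x}$ as in Table~\ref{tab:cosimSchemeBC} versus ``added to the extrapolated variables'' as in the theorem statement, related by Lipschitz continuity of $\vec f_k$ in its input argument) is also apt given the paper's slightly inconsistent phrasing.
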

Numerical tests and considerations indicate that balance correction methods improve the convergence order of explicit cosimulation schemes by one, but no proof has been written down for this yet. 

\subsection{Linear Problem for convergence examination}

These results were confirmed numerically using the twodimensional linear problem
\begin{equation}\label{eq:LinProb}
\dot{\vec x} = \ten A \vec x \quad 
\end{equation}
which written as a cosimulation problem is
\begin{equation}\label{eq:LinProbCo}
\begin{cases}
\dot x_1 = a_{1,1}x_1 + a_{1,2}\operatorname{Ext}(x_2) 
\\ \dot x_2 = a_{2,2}x_2 + a_{2,1}\operatorname{Ext}(x_1) 
\end{cases}.
\end{equation}
The matrix entries are chosen such that 
\begin{itemize}
\item an unidirectional dependency on input is given: $a_{11},a_{21},a_{22}\neq 0$, $a_{12}=0$,
\item a mutual dependency is given: $a_{12},a_{21}\neq 0$, $a_{11}=a_{22}=0$.
\end{itemize}
Explicit and implicit extrapolation was used.
\begin{figure}
\includegraphics[ width=0.8\textwidth]{./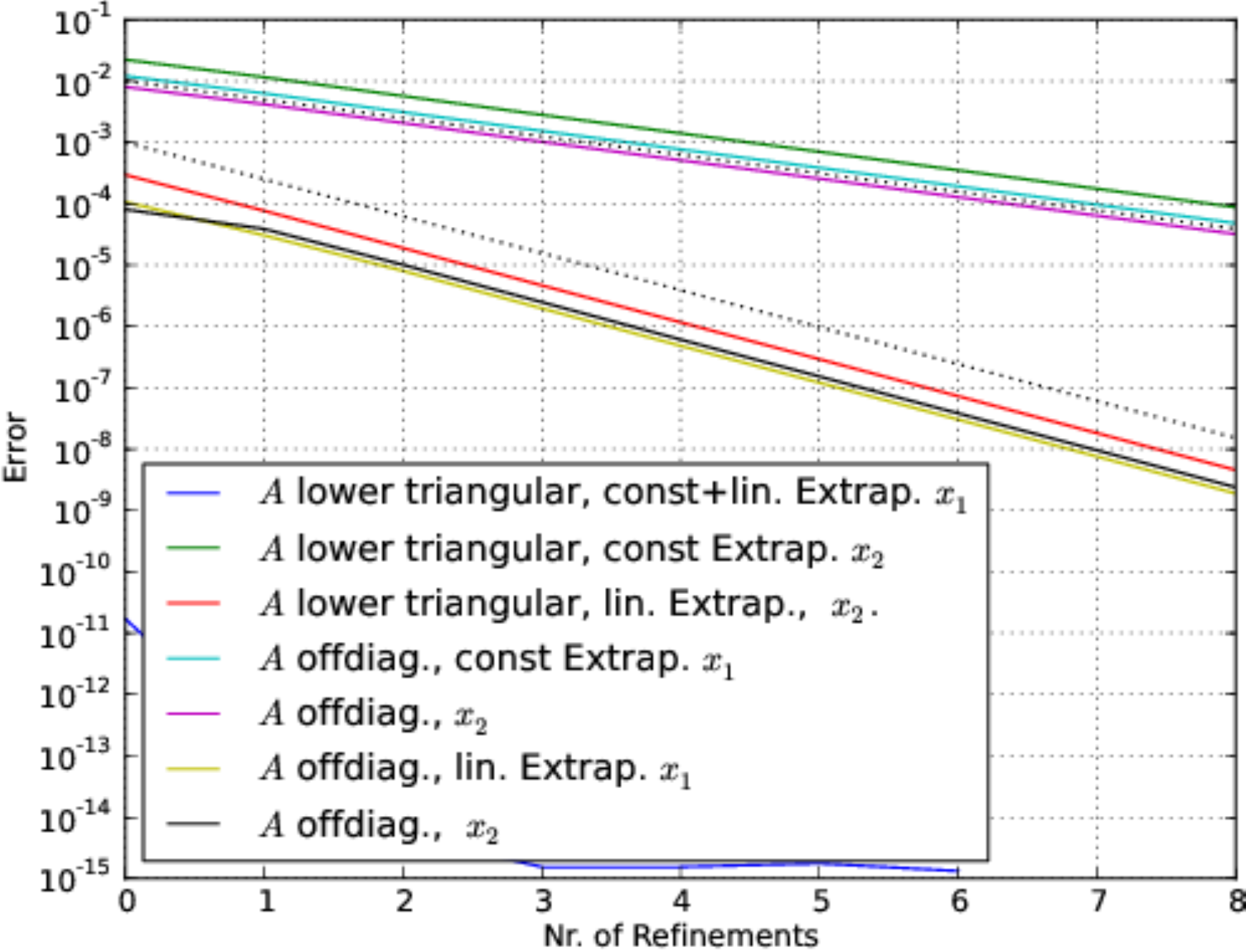}
\caption{\label{CosimConvergencePlot} Simulation of the system \eqref{eq:LinProbCo} with matrix $A\in \mathbb R^2$ realizing one-directional coupling (lower triangular shape) resp. mutual coupling (only offdiagonal entries are nozero) in the cosimulation scheme with constant and linear extrapolation, varying the exchange step size $H$.  }
\end{figure}
As ODE solver on subsystems, any solver that does not dominate the convergence and stability behavior of the cosimulation scheme could be used. Choice was \emph{vode} and \emph{zvode} from the \emph{numpy} Python numerics library, which both implement implicit Adams method if problem is nonstiff and BDF if it is.  \\

Figure \ref{CosimConvergencePlot} shows the convergence result for the four situations.   To show the predictions made by \eqref{convErrBoundingFunction} in terms of $H$, it is necessary that the subsystems methods contribution $\tilde{\tilde C} h_\Delta^{p}$ is of higher order than the extrapolation and that the method used is a one-step method, as  many multistep methods use one-step methods of unknown order at start, and that would become visible here as at each exchange step requires one restart \cite{Moshagen2017}. 
%
Thus \emph{dopri5}, an explicite Runge-Kutta method, was chosen, using the built-in stepsize control with default absolute tolerance $10^{-12}$.\\
The figure shows that convergence is of order 1 for constant extrapolation and of order 2 for linear extrapolation, as predicted by \eqref{convErrBoundingFunction}. As discussed, there is no order loss for linear extrapolation and circular dependency of inputs, but not even an higher error, in spite of the negative effects that should occur.\\ 
The error of the first component of the lower triangular, thus unidirectionally coupled system is very low as it has no extrapolation contribution, thus indicating that the error made by $dopri$ is low enough to allow for judgement of the effect of extrapolation error.

\subsection{Stability}\label{stability}

But concerning stability, in   \cite{ScharffMoshagen2017} and \cite{Moshagen2017} it is shown that cosimulation schemes are not stable for linear problems. The stability for linear problems replaces the notion of A-stability, as the one-component equation used there cannot be split. Linear stability  is shown numerically as well as by arguing that the ODE \eqref{eq:LinProbCo} which is  induced by extrapolating inputs is not stable   \cite[Section 3.5]{Moshagen2017}.

Balance correction cannot settle this issue: the linear spring-mass oscillator  
\begin{equation} \label{springMass}
\dot{ \vec{ x}}
= \ten A\vec{x} 
= \begin{pmatrix}
0 & 1\\
-\frac{c}{m} & \left(-\frac{d}{m}\right)
 \end{pmatrix} \vec x, 
 \qquad \vec{x} = 
 \begin{pmatrix}
x\\
\dot x
 \end{pmatrix}
\end{equation}
with mass $m$, spring constant $c$, and in which the damping constant $d$ shall vanish, was solved to numerically examine the stability of the method. This problem is an implementation of the linear problem \eqref{eq:LinProb} and the most simple problem possible that is linear and can be splitted, as it has the least number of components required for coupling, and coupling is its only contribution to derivatives, and this moreover linear. The eigenvalues of the uncoupled problem are purely imaginary, so the problem is stable. \\
%
%
%
\begin{table}[h!tb]
\begin{tabular}{c|c} 
Spring  & Mass\tabularnewline
\multicolumn{2}{ c }{System States}\tabularnewline
 $ x_1 := s =  x $ 		&  $ x_2 := v = \dot x $ \tabularnewline
\multicolumn{2}{ c }{Outputs}\tabularnewline
$ u_{21}:=F =-cx  $		&$ u_{12}:= v = \dot x $ \tabularnewline
    &    \tabularnewline
\multicolumn{2}{ c }{Inputs}\\
$  u_{12}$	& $ u_{21}$\tabularnewline
\multicolumn{2}{ c }{Equations}\tabularnewline
$\dot { x_1} = \operatorname{Ext}( u_{12}) = v$ & $\dot{  x_2} = -\frac{1}{m} \operatorname{Ext}(u_{21}) $\tabularnewline
                & $= -\frac{F}{m}$
\end{tabular}
\begin{tabular}{c|c}
Spring & Mass\tabularnewline
\multicolumn{2}{ c }{System States}\tabularnewline
 	$\hdots$	&  $\hdots$\tabularnewline
\multicolumn{2}{ c }{Outputs}\tabularnewline
\parbox[t]{0.25\textwidth}{$ \vec  u_{12}:=(f,\dot f)$\\ $=(-cx, -cv) $ } 		&\parbox[t]{0.25\textwidth}{ $\vec  u_{21}:=( v,a)$ \\$= (\dot x, \dot f/m )$} \tabularnewline
\multicolumn{2}{ c }{Inputs}\\
$ \vek u_{12}$	& $ \vek u_{21}$\tabularnewline
\multicolumn{2}{ c }{Equations}\tabularnewline
 $\vdots$&$\vdots$	\tabularnewline
 &\tabularnewline
\end{tabular}
\caption{\label{CosimSchemesSpringMass}Cosimulation Schemes for the spring-mass system, left constant, right linear extrapolation}
\end{table}
It is solved with the cosimulation scheme  given as above in this section, which yields  
 \ref{CosimSchemesSpringMass}.
Output of the spring is the force $F = -cx$, 
that of the mass is the velocity $v = \dot x$. 
\begin{figure}
\includegraphics[ width=0.5\textwidth]{./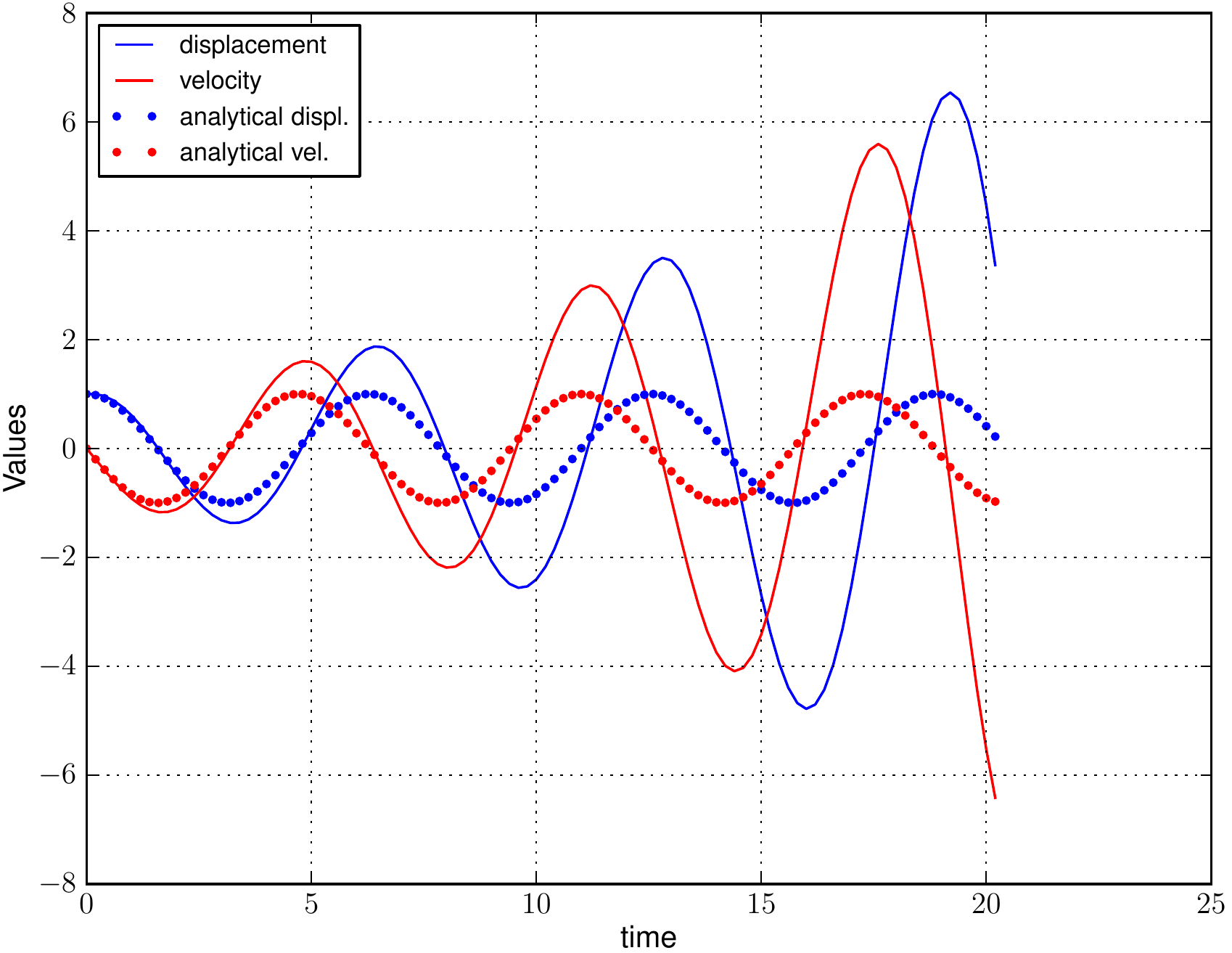}
\includegraphics[ width=0.5\textwidth]{./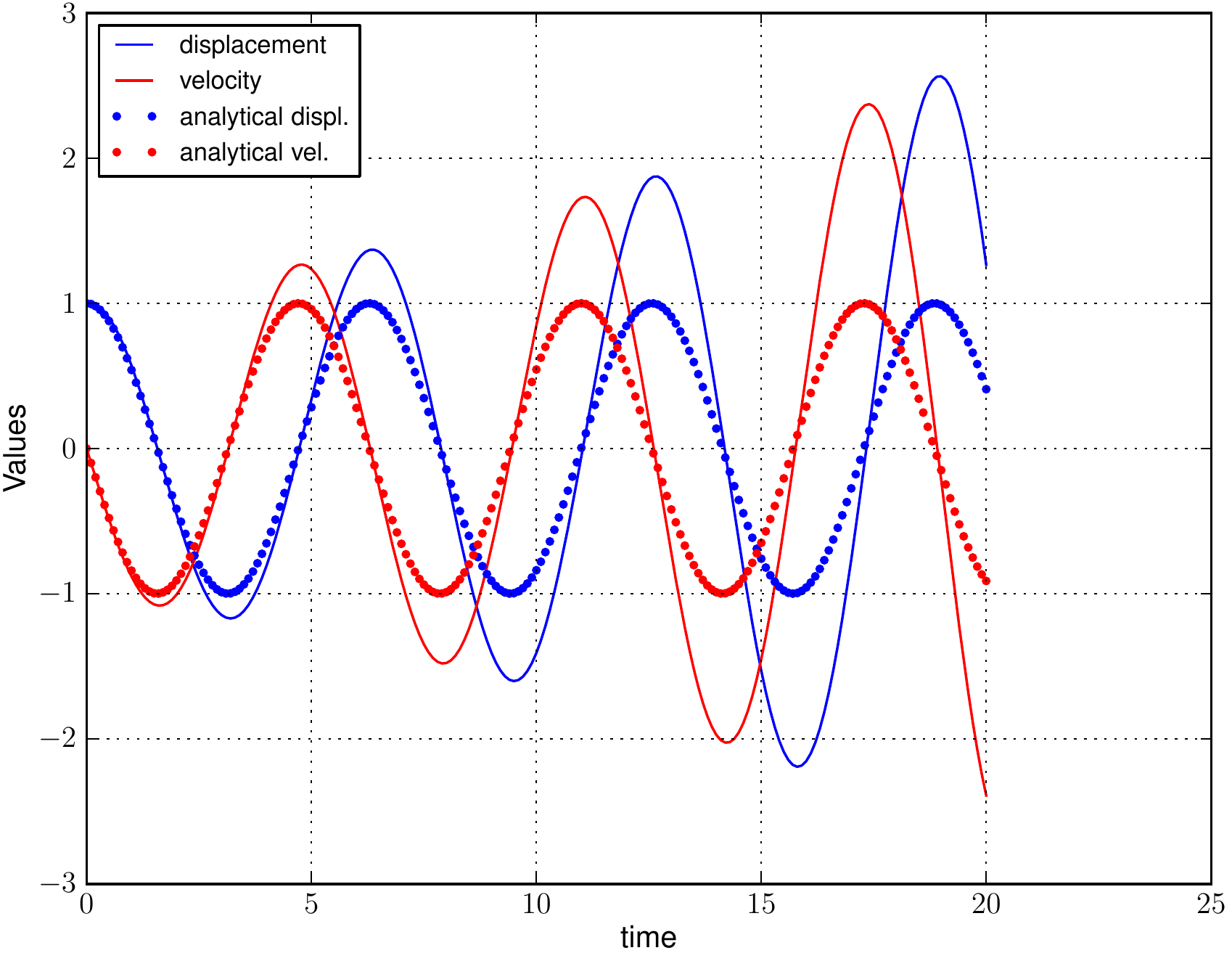}\\
\includegraphics[ width=0.5\textwidth]{./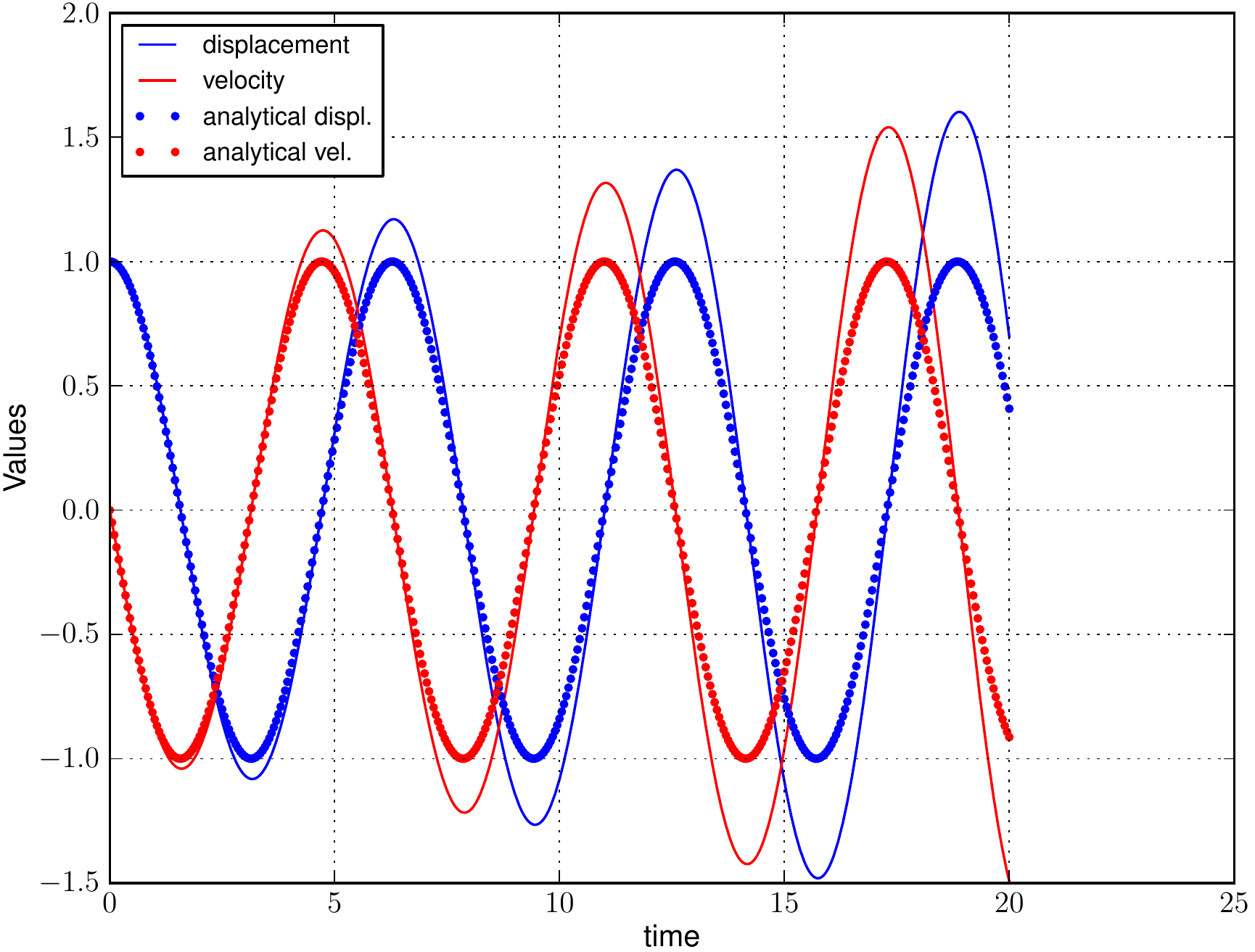}
\includegraphics[ width=0.5\textwidth]{./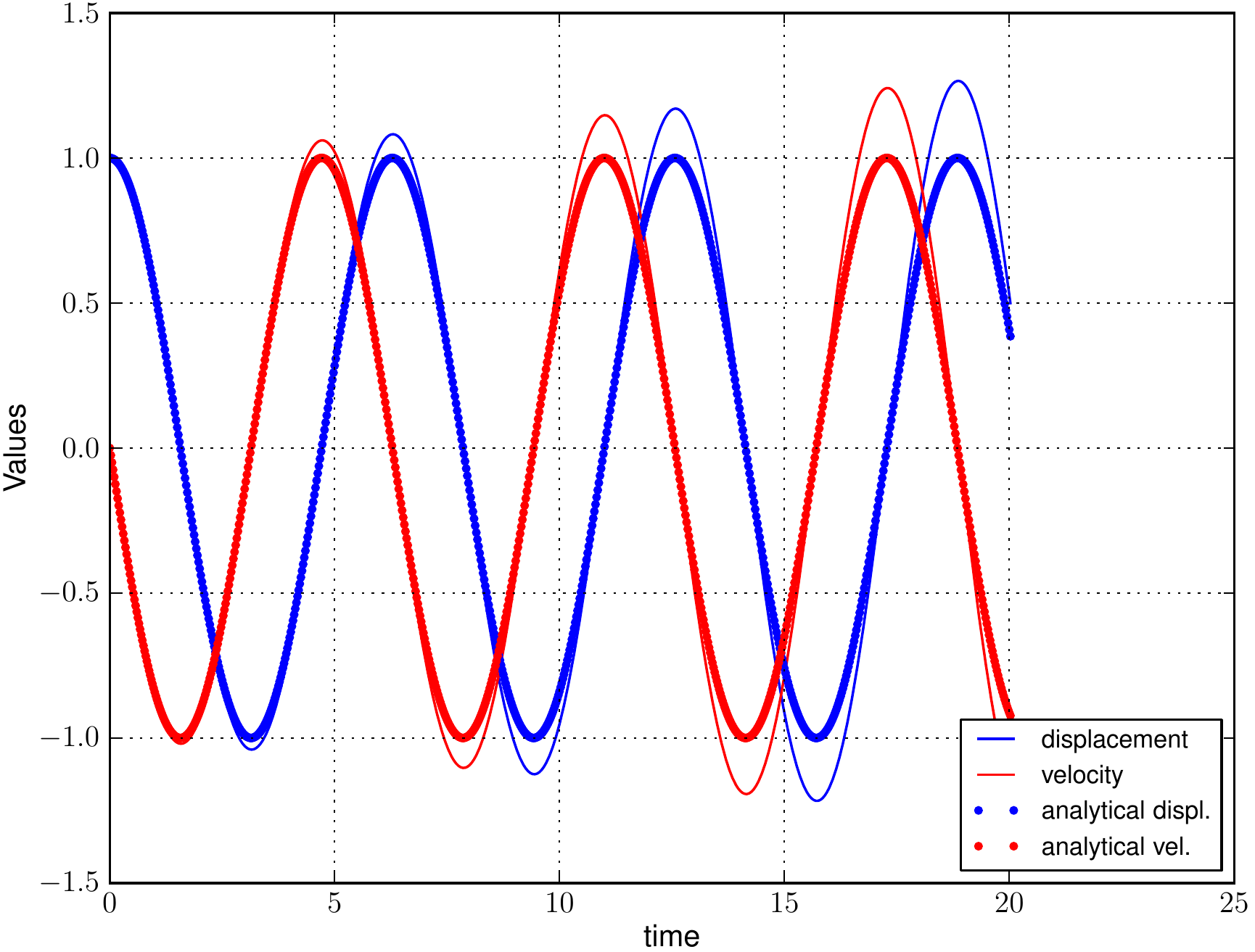}\\
\caption{\label{fig:constCosimConvergence} Simulation of the system \eqref{springMass} in the cosimulation scheme with constant extrapolation, varying the exchange step size $H$.  Upper row, left: $H = 0.2$, right:  $H = 0.1$, lower row: left: $H = 0.05$, right:  $H = 0.025$. Convergence of order $H$ is given, but there is no stability for any step size in sight. Previously published in \cite{ScharffMoshagen2017}.}
\end{figure}
\begin{figure}
\includegraphics[ width=0.5\textwidth]{./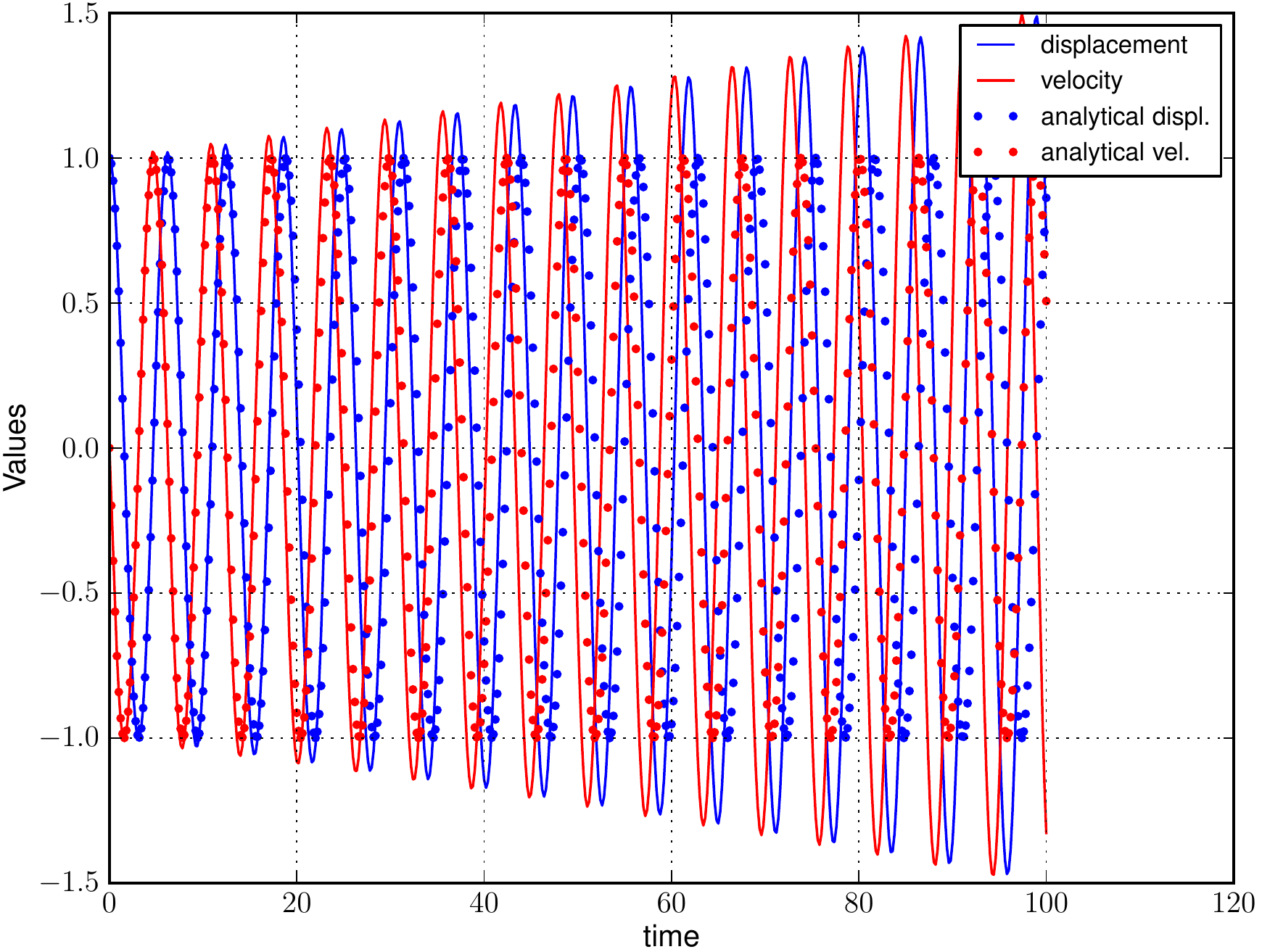}
\includegraphics[ width=0.5\textwidth]{./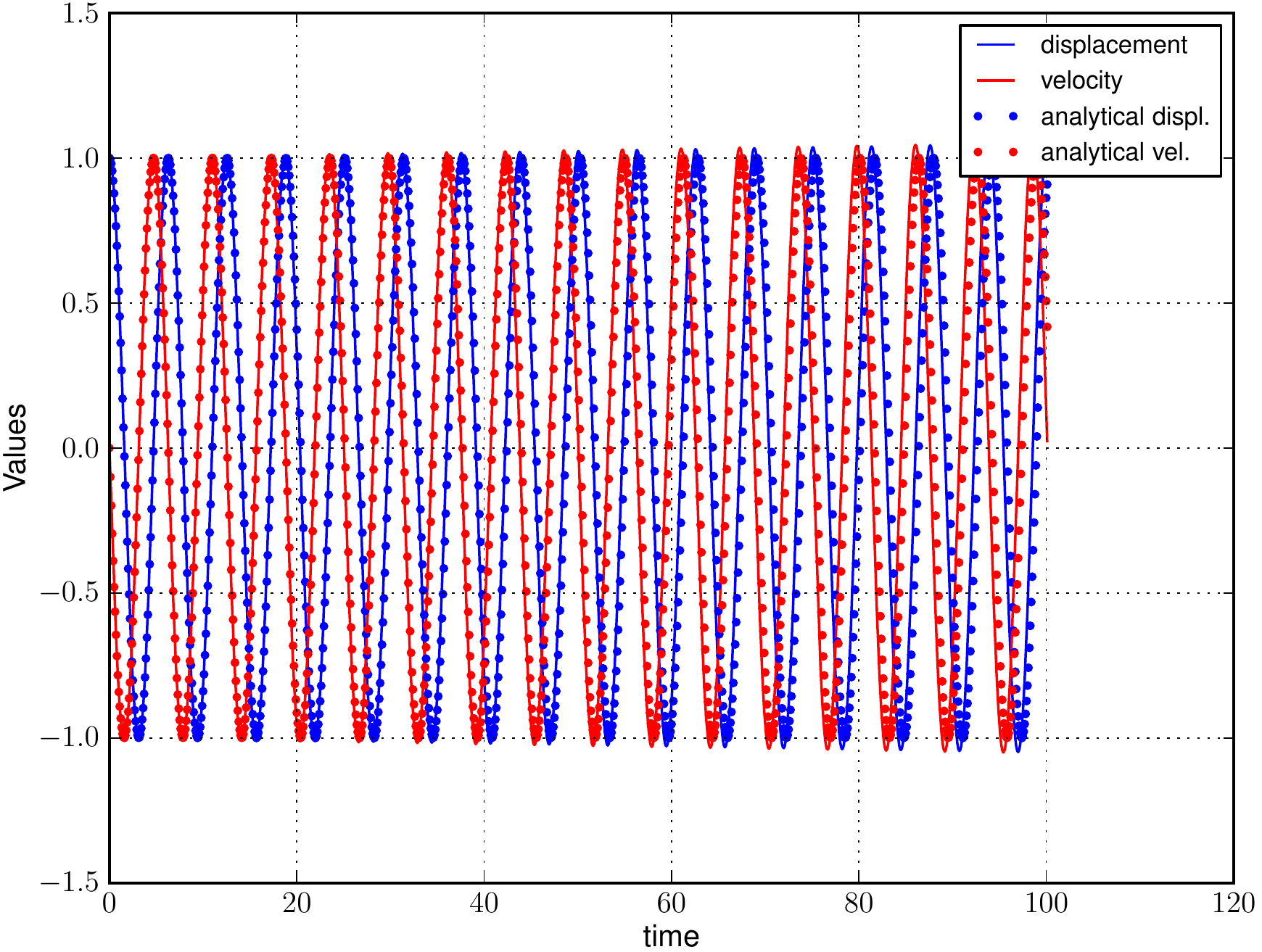}\\
\caption{\label{fig:linCosimConvergence} Simulation of the system \eqref{springMass} in the cosimulation scheme with linear extrapolation, varying the exchange step size $H$.   Left: $H = 0.2$, right:  $H = 0.1$. $H = 0.05$   $H = 0.025$ are not shown because error is not visible in plot. Convergence of order $H^2$ is given, but as in the constant extrapolation case there is no stability. Previously published in \cite{ScharffMoshagen2017}.}
\end{figure}
The numerical convergence examination backs up the results Theorem \ref{th:convergence} and \ref{th:convergenceBC}. 
But  the method is unstable for its explicit contributions, as proven in  \cite[Section 3.5]{Moshagen2017}. This means $\norm{\vec x}\longrightarrow \infty$ for $t\longrightarrow \infty$. The energy of our system is $E=\frac{1}{2}m v^2+ \frac{1}{2}cs^2=\scalar{\vec x}{\vec x}_{\frac{1}{2}\operatorname{diag}(m,c)}$, which is an equivalent norm, so lack of stability is equivalent to energy augmentation. \\  
So here, this lack of stability can be interpreted in physics 
as a consequence of extrapolation errors in factors of power acting on subsystems boundaries:
 In \cite{ScharffMoshagen2017} it was pointed out  
that errors in the force $y_1$ made during data exchange lead to errors in the power that acts on the mass. The system picks up energy and behaves unstable (See figure \ref{fig:constCosimConvergence}). 

In the following sections, it will be shown that balance correction techniques applied to the impulse as the integral of the force do not prevent this effect, as the a posteriori refeed of force then acts at another system state than it should, as the states have changed meanwhile - here the mass has changed its velocity.\\

\begin{figure}
\includegraphics[ width=0.5\textwidth]{./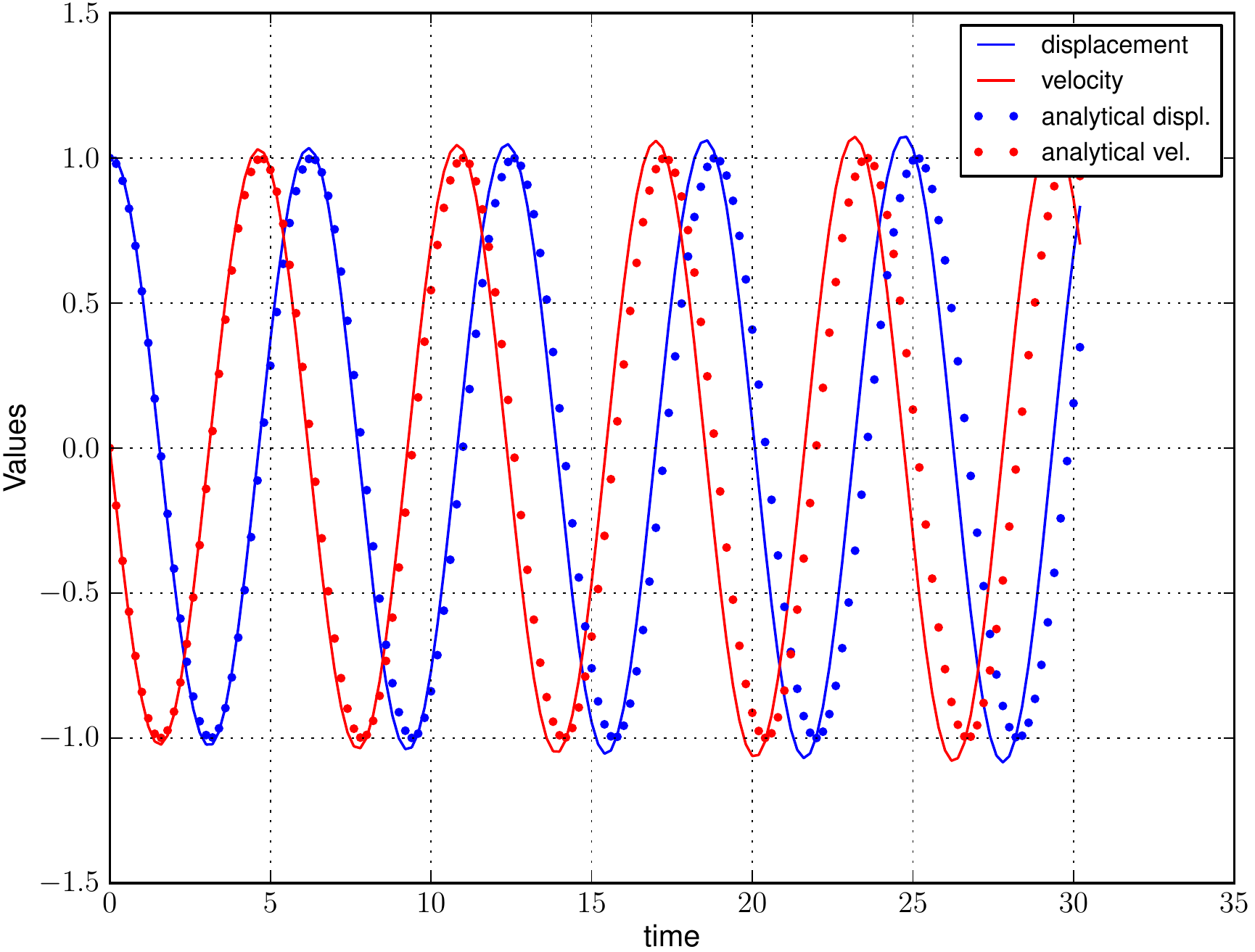}
\includegraphics[ width=0.5\textwidth]{./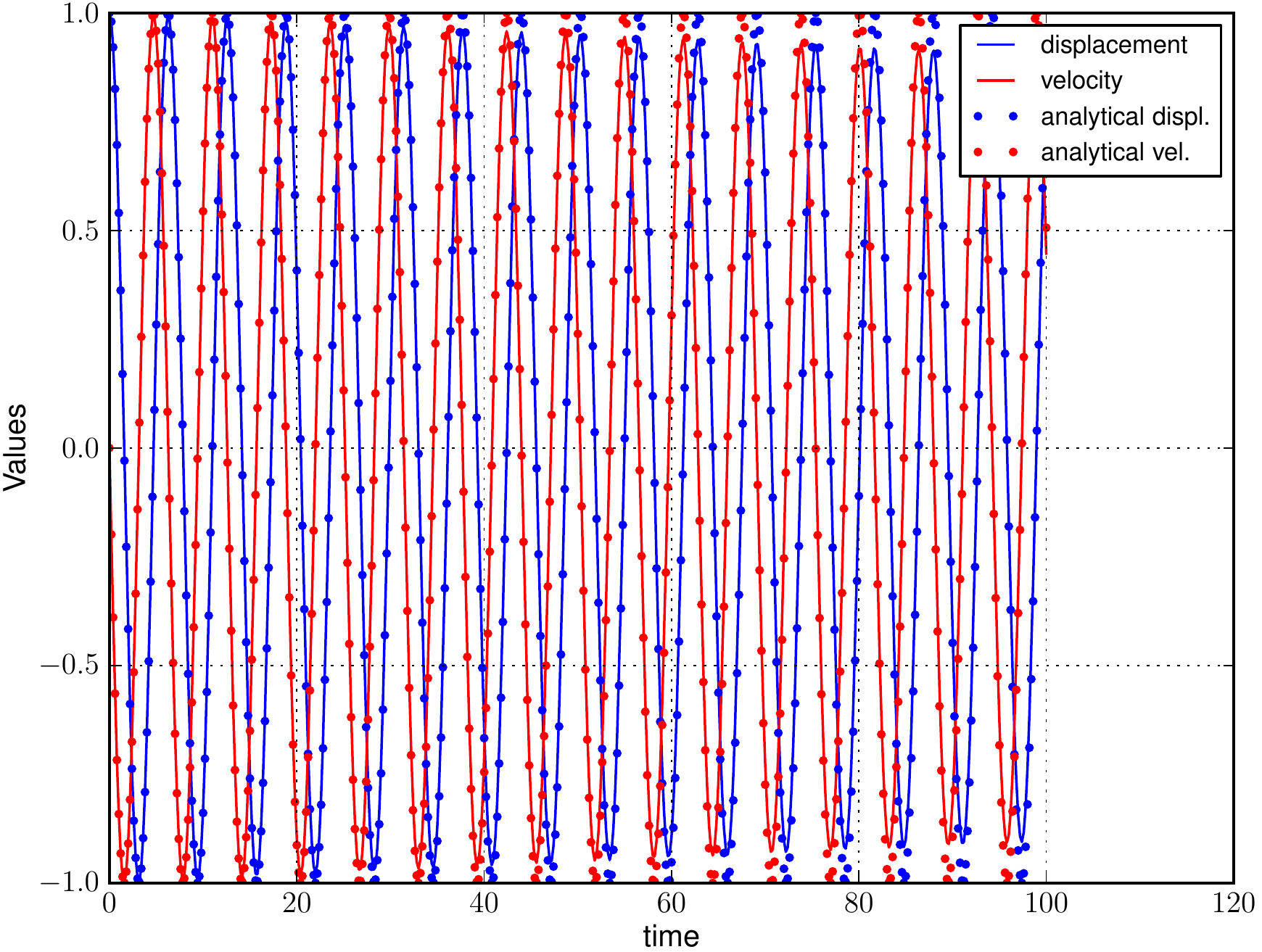}
\caption{ \label{fig:springMassBCCosim} Simulation of the system \eqref{springMass} in the cosimulation scheme with balance correction, $H=0.2s$. Left: constant extrapolation, right: linear extrapolation of received data.  The rise in energy is decelerated for the constant extrapolation (compare to Figure \ref{fig:constCosimConvergence}), but stability is not achieved. For linear extrapolation, the simulation acts as if damped (compare  to Figure \ref{fig:linCosimConvergence}) and thus is strictly speaking stable,  but energy is not conserved as it should.}
\end{figure}

\section{Reclassification of Balance Errors}\label{classification}
\subsection{ Errors in balances while exchanging conserved Quantities}
These errors are defined as those  that can be calculated as   $\Delta E_{u_{ij}}^{k} = \int_{t_{k-1}}^{t_k}\vek u_{ij} dt  - \int_{t_{k-1}}^{t_k} \overline{ \vek u_{ij}}^k dt$, thus    those resulting directly from the extrapolation error in flow of the exchanged conserved quantity.
Classical balance errors of extrapolations of $k$-th order arise on intervals where $dt^{k+1} f_m$ does not change signs. The balance error is often partially compensated during passing through intervals with positive and negative sign. This does not imply it is small during all intervals of the simulation.\\
 As a typical such simulation situation think of an automotive driving cycle: Using piecewise constant extrapolation of exchanged data, there usually is a loss in conserved quantity after the phase of rising system velocity has passed - this loss remains uncompensated during the ( typically significant) phase of elevated speed, see e.g. \cite{KosselDiss}. This originally motivated the balance correction method \cite{Scharff12}.
 

\subsection{Errors in balances while exchanging  factors of conserved quantities}\label{factors}
If an exchanged quantity influences a conserved quantity, the balance of that quantity is disturbed by the approximation error of an input as described above, even if the input quantity is nonconserved. Furthermore, as the conserved quantity depends on other factors, its imbalance may persist even if the balance of the exchanged quantity is reestablished, e.g. by errors compensating each other.

The prominent example of a globally conserved quantity that is exchanged between subsystems  via its factors is energy. Think of an exchange of mechanical energy which in a cosimulation context is exchanged by passing displacement $s$ from $S_1$ to $S_2$ and the force $f$ vice versa. There is a power $p=vf$ acting at the interface between the two systems. Approximation errors on the receiving side cause an error in this power:  Into the one subsystem flows an amount of energy different from the amount of energy leaving the other. \\
Of course, the received data influences the other factors of the energy, thus one cannot directly conclude from the input error to the energy error. It is correct, anyhow, that 
this influence is not a healing one in general.

\subsection{Example for augmentation of energy}\label{exampleAugmentationOfEnergy}

Oscillating systems are most suggestive to illustrate the nature of those energy-based balance errors as they accumulate energy rapidly and furthermore the effect of extrapolation errors is comparableto that of phase angles in electric engineering. Consider the simple linear mass-spring system \eqref{springMass} given by $\dot{ \vec{ z}}
 = \ten A\vec{z}$,
 \begin{wrapfigure}{l}{0.2\textwidth}
\includegraphics{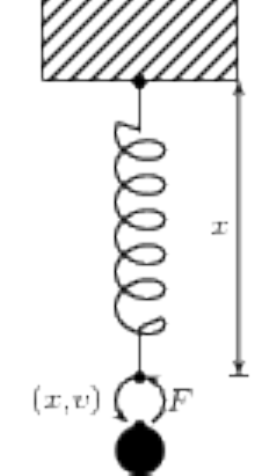}
\end{wrapfigure}
\begin{equation} 
 \ten A
= \begin{pmatrix}
0 & 1\\
-\frac{c}{m} & 0
 \end{pmatrix}. 
 \end{equation}
The solution of an initial value problem with this system is 
\begin{equation}
\vec{z} = \vec{z}_0 \operatorname{exp}\left(  \ten A(t-t_0) \right)
\end{equation}
or, equivalently, 
\begin{equation}
z_{1} = x = \sqrt{\frac{m}{c}}\dot{x}_0 \operatorname{sin}\left(  \sqrt{\frac{c}{m}}(t-t_0)\right) + x_0\operatorname{cos}\left(  \sqrt{\frac{c}{m}}(t-t_0)\right).
\end{equation}
Thus the derivatives $\dot x$ and $\ddot x$ are sines and cosines. Choosing for simplicity $v_0=0 $ and $t_0=0$ we have
\begin{equation}
\begin{split}
 x =  x_0\operatorname{cos}\left(  \sqrt{\frac{c}{m}}t\right),\qquad  \dot x = -  \sqrt{\frac{c}{m}} x_0\operatorname{sin}\left(  \sqrt{\frac{c}{m}}t\right) ,\qquad \\ \text{and } \qquad \ddot x = -  \frac{c}{m} x_0\operatorname{cos}\left(  \sqrt{\frac{c}{m}}t\right).
 \end{split}
\end{equation}
The mechanical force is $f = m\ddot x$. \\
This system was treated with a cosimulation scheme given in  Table \ref{tab:cosimSchemeBC}. 
Output of the spring is the force $f = -cx$, 
that of the mass is the velocity $v = \dot x$.\\
 The power acting on the subsystem mass is $P=F v = m\ddot x \dot x$. The energy received by the mass thus is $W = \int_{t_0}^{t_1} m\ddot x \dot x\, dt$, and as $ P = \text{const}\cdot \operatorname{sin}\left( 2  \sqrt{c/m}t\right)$   energy is of the form $ \text{const}\cdot \operatorname{cos}\left( 2  \sqrt{c/m}t\right)$. It  vanishes at $t_1$ with  $t_1-t_0=k\pi$, $k\in \mathbb N$, elsewhere it is bounded for all $t$.\\

If the split system is calculated with a cosimulation method and piecewise constant extrapolation of inputs, the force as it is seen by the mass is effectively shifted to the right. The analogy with the reactive power and the real power of an electrical network is apparent. As $\sin(\omega t)\cos(\omega t + \phi) = (\sin(2\omega t + \phi) + sin(-\phi))/2$,  work from oscillating systems with phase shift contains the integral over a constant and thus grows unbounded. The plots in figure \ref{fig:constCosimConvergence} clearly demonstrate this.\\
This kind of error occurs whenever a quantity inducing an energy on both sides of a subsystem boundary is exchanged across that boundary, making an approximation error, and thus energy appears on the one side without vanishing on the other side. 
Thus, it is also observed using degree 1 and higher extrapolation polynomials. For example,
assuming linear Hermitean extrapolation, also derivatives of these quantities are exchanged. Let the displacement of the mass increase and the neighbouring system give a force against this movement as a response, i.e. let the mass move towards a spring and compress it. The force will decelerate the mass and reverse its movement. Now it is clear that due to the piecewise linear extrapolation of the data in this zone where it is convex (concave)  a bigger displacement is assumed by the force delivering subsystem, 
making it respond with a higher force.  The energy of the system is augmented - see the plots in figure \ref{fig:linCosimConvergence}.\\

Balance correction techniques applied to the impulse as the integral of the force do not prevent  the system from picking up energy and behaving unstable, as the a posteriori refeed of force then acts at another system state than it should, as the states have changed meanwhile - here the mass has changed its velocity.\\

Mathematically, these balance errors are errors in arguments of a functional which would be conserved in the exact solution but is not in the cosimulation solution. This motivates
our method that is functional conserving, which is presented in section \ref{enforcingEnergyBalance} 
and which we finally examine for stability.

\section{Enforcing  Balance by sharing the view on Potential flow}

It now is clear that balance correction methods can hardly stabilize systems that suffer from the effects described in \ref{classification} and \ref{factors} as it considers only an error made in the amount of a quantity, but the correction of amount is done at another, so wrong, time.

\subsection{The method}\label{enforcingEnergyBalance}
The key feature to establish energy balance is exchanging the value of power  and calculating the variable of interest from that power.   
Consider a cosimulation problem with subsystems $S_1$ and $S_2$ as given by equations \eqref{split1} - \eqref{coupling2} with states $\vec x_1$ and $\vec x_2$ respectively and inputs $\vec u_{21}$ and $\vec u_{12}$.  We suggest the following procedure to enforce energy balance between subsystems $S_1$ and $S_2$. 
\begin{enumerate}
\item At data exchange timepoint $T_n$ the powers $P_{ij}$ as the flux of energy are calculated in both subsystems, using up-to-date input $\vek u^n_{ij}$. In general $P_{ij}\neq P_{ji}$, although in some situations equality holds. In the input vectors $\vec u_{ij}$  one component is replaced by $P_{ij}$ and that new vector is exchanged between the subsystems. Applied to  $S_1-S2$ setting,  the values $P_{21}$ and $P_{12}$ are exchanged, this means $S_1$'s point of view about the power is passed on to $S_2$ and vice versa. The $\vec u_{12}$ and $\vec u_{21}$ are also exchanged, but one component $(u_{ij})_k$ of each $\vec u_{ij}$ is omitted as it is now calculated from $P_{ij}$.
\item Now both subsystems have the same information and thus the opportunity to draw the same conclusion on what energy exchange should be assumed. We denote this assumed energy exchange as
\begin{equation}
\hat{P}_{12}(P_{21},P_{12}) = -\hat{P}_{21},
\end{equation}
a straightforward choice is $\hat P_{21} = (P_{12}-P_{21})/2= -\hat P_{12}$, where now it is necessary to  define  flow directions: $P_{ij}$ shall be negative if it leaves $S_j$, so it is counted with opposite sign in $S_i$.\\
The former input $(u_{12})_k(t)$ is calculated subject to 
\begin{equation}\label{extrapolationU_i}
P_{21}(\vec x_1(t), \vec u_{12\setminus k},(u_{12})_k(t)) = \operatorname{Ext}(\hat P_{12})
 \end{equation}
 Indexing is like follows: $P_{21}$ is the power calculated in $S_1$ for passing to $S_{2}$, calculated using $\vec u_{12}$, the input into $S_1$. 
 Analogously  $(u_{21})_k (t)$ s.t. $P_{12}(\vec x_2, \vec u_{21\setminus l}, (u_{21})_l) = \operatorname{Ext}(\hat P_{21})$ is calculated. This  requires that the maps $P_{ij}(.,.,(u_{ji})_k)$ are monotone. 
The expression ${12\setminus k}$ in index is to say that the $k$-th component of the vector is left out.

Now it is established that the inputs of $S_1$ and $S_2$ are consistent in terms of energy conservation for all $t$.\\
\end{enumerate}

\paragraph{Inversion of $P$ and its Notation} The solution of finding $(\vec u_{12})_k(t)$ subject to $P_{12}(\vec x_1(t), \vec u_{12\setminus k},(u_{12})_k(t)) = \operatorname{Ext}(\hat P_{12})$ will in the following be denoted
as 
\begin{equation}
(u_{12})_k(t) =P_{21}^{-\vec u_{k}}(\operatorname{Ext}(\hat P_{12}),\vec x_1, \vec u_{12\setminus k} )
\end{equation}
in analogy to the usual denotion of inverse functions by exponent  $\cdot^{-1}$. This inversion in fact can be ill-conditioned in practice.\\

Precisely, the cosimulation scheme\\[11pt]
\begin{center}
\begin{tabular}{c|c} 
$S_1$ & $S_2$\tabularnewline
\multicolumn{2}{ c }{System States}\\ 
$\vec x_1  $ 		&$\vec x_2 $ \tabularnewline[11pt]
\multicolumn{2}{ c }{Outputs}\tabularnewline
$ (\vec u_{21}, \dot{\vec u}_{21})$ 		&$ (\vec u_{12},\dot{\vec u}_{12})$ \tabularnewline[11pt]
\multicolumn{2}{ c }{Inputs}\\
$ (\vec u_{12},\dot{\vec u}_{12})$ &
$ (\vec u_{21}, \dot{\vec u}_{21})$ \tabularnewline[11pt]
\multicolumn{2}{ c }{Equations}\tabularnewline
$\dot {\vec x_1} = \vec f_1(\vec x_1, \operatorname{Ext}(\vec u_{12}))	$ & $\dot{ \vec x_2} = \vec f_2(\vec x_2, \operatorname{Ext}(\vec u_{21}))$	\\
\end{tabular}
\end{center}
is replaced by
\begin{center}
\begin{tabular}{c | c}
$S_1$ & $S_2$\tabularnewline
\multicolumn{2}{ c }{Outputs}\tabularnewline
$ \vec u_{21, \text{Std}}:= $               & $\vec u_{12,\text{Std}}:= $\tabularnewline
$((\vec u_{21})_1, ...(\vec u_{21})_n)$     & $((\vec u_{12})_1, ...(\vec u_{1 2})_m)$\tabularnewline
$P_{21}= P_{21}(\vec x_1, \vec u_{12}) ) $ & $P_{12}= P_{12}(\vec x_2, \vec u_{21})) $ \tabularnewline[11pt]
\multicolumn{2}{ c }{Inputs (without loss of generality)}\\
$\vec u_{12}:=$	&$\vec u_{21} :=  $\tabularnewline
$( (\vec u_{12})_1, ...(\vec u_{1 n})_{ m-1},  P_{12}(\vec x_2, \vec u_{21}))$	&$ ((\vec u_{21})_1, ...(\vec u_{21})_{ n-1}, P_{21}(\vec x_1, \vec u_{12}) )$\tabularnewline[11pt]
\multicolumn{2}{ c }{Variables depending on Inputs}\\
$\hat P_{12}(P_{21},P_{12})$ & $-\hat P_{12}(P_{21},P_{12})$\tabularnewline
$\dot{\hat P}_{12}(\dot P_{21},\dot P_{12})$ & $-\dot{\hat P}_{12}(\dot P_{21},\dot P_{12})$\tabularnewline
$(\vec u_{12})_m(t)$  
s.t. 

	&  $(\vec u_{21})_n(t)$ s.t. \tabularnewline
 $ P_{21}(\vec x_1, \vec u_{12\setminus m},(\vec u_{12})_m)(t) = \operatorname{Ext}(\hat P_{12})$
	&   $ P_{12}(\vec x_2, \vec u_{21\setminus n},(\vec u_{12})_n)(t) = \operatorname{Ext}(\hat P_{12})$
    \tabularnewline[11pt]
\multicolumn{2}{ c }{Equations}\tabularnewline
$\dot {\vec x_1} = \vec f_1(\vec x_1, \vec u_{12, \text{Std}})	$ & $\dot{ \vec x_2} = \vec f_2(\vec x_2, \vec u_{21, \text{Std}})$\\
\end{tabular}
\end{center}
%
Energy balance still holds when  the method is extended to more than two subsystems as balance holds at each inter-subsystem boundary.
Again, the expression $\vec u_{ij\setminus m}$ in index is to say that the $m$-th component of the vector $\vec u_{ij}$ is left out.

\subsubsection{Idle negotiation in two subsystems situation} Given two subsystems with one interface between them, from physics of course $P_{21}(\vec x_1, \vec u_{12}(x_2))= - P_{12}(\vec x_2, \vec u_{21}(x_1))$ holds. In fact,  $P_{12}$ and $P_{21}$ depend on the same variables $x_1$ and $x_2$ during cosimulation, and although the two powers may use different formula, they give equal result. In this case, the negotiating step begins with equal views and so is idle.

\subsection{Example}\label{exampleNegPower}
To apply the scheme given in  \ref{enforcingEnergyBalance} above to the model of a spring-mass system as \eqref{springMass},  replacing the standard cosimulation scheme from table \ref{CosimSchemesSpringMass}, one first calculates the energies of the systems parts, powers acting on subsystems boundaries
, and their derivatives. As $P_i=\dot W_i$, $P_i<0$ indicates that energy leaves $S_i$.\\
  
\begin{tabular}{p{0.45\textwidth} | p{0.45\textwidth}}
\center{Spring} &\center{ Mass}\tabularnewline[11pt]
\multicolumn{2}{ c }{Energy}\tabularnewline
\center{$W=\int -f \,ds=\int -f v\,dt$ }		
&\center{$W=\frac{1}{2}mv^2=\int fds =\int ma\,ds = \int mav\,dt$ }\tabularnewline[11pt]
\multicolumn{2}{ c }{Power}\\
 \center{$P=\dot W = -f v = cxv$}	& \center{$P = \dot W = mav=fv$}\tabularnewline[11pt]
\multicolumn{2}{ c }{Derivative of Power}\tabularnewline
 \center{$\dot P=c(v^2+sa) $	} & \center{$\dot P =  m(a^2 +v\dot a)= m(a^2 +v\frac{\dot f}{m})$}
\end{tabular}\\

$\dot f$ is available as output of spring, as usually serves as derivative of input.
Now the following systems are treated:\\
\begin{tabular}{p{0.45\textwidth} | p{0.45\textwidth}} 
\center{Spring} & \center{Mass}\tabularnewline
\multicolumn{2}{ c }{System States}\tabularnewline
\center{ $ x_1 := s =  x $ } 		&\center{$ x_2 := v = \dot x $} \tabularnewline
\multicolumn{2}{ c }{Outputs}\tabularnewline
\center{$ (\vec u_{21,\text{Std}})_{1}:=f =-cx  $} 		&\center{$ (\vec u_{12,\text{Std}})_{1}:= v = \dot x $} \tabularnewline
\center{$ (\vec  u_{21, \text{Std}})_{2}:=\dot f =-cv  $} 	&\center{$ (\vec u_{12,\text{Std}})_2:=  \dot v=f/m $} \tabularnewline[11pt]
\multicolumn{2}{ c }{(intermediately exchanging $\vec u_{ij,\text{Std}}$)}\tabularnewline
\center{$(\vec u_{21})_1 =P(x_1,\vec u_{12}) = cxv = cx_1(\vec u_{12})_1 $} 		&\center{$(\vec u_{12})_1  = P(x_2,\vec u_{21}) = fv =x_2(\vec u_{21})_1$}
 \tabularnewline
\center{$(\vec u_{21})_2= \dot P(x_1,\vec u_{12}) =c(v^2+xa)=c((\vec u_{12})_1^2 + x_1(\vec u_{12})_2) $ }	&\center{$(\vec u_{12})_2 =\dot P(x_2,\vec u_{21})=m(a^2 +v\frac{\dot f}{m})=m\left(\frac{(\vec u_{21})_{1}}{m}^2 + x_2\frac{(\vec u_{21})_{2}}{m}\right) $} 
\tabularnewline[11pt]
\multicolumn{2}{ c }{Inputs}
\tabularnewline
\center{$  (\vec u_{12})_1:=  \hat P$}	& \center{$ (\vec u_{21})_1 := -\hat P$}\tabularnewline
\center{$  (\vec u_{12})_2:=  \hat{ \dot{P}}$}	& \center{$ (\vec u_{21})_2 := -\hat {\dot{P}}$}
\tabularnewline[11pt]
\multicolumn{2}{ c }{Variables (inputs of standard method $\vec u_\text{std}$ ) depending on Inputs (Power)}\\
\center{$ v = \frac{\operatorname{Ext}(\hat P)}{cs}= \frac{\operatorname{Ext}(\vec u_{12})_1}{cx_1}$} 
&\center{$f = -\frac{\operatorname{Ext}(\hat P)}{v} = \frac{\operatorname{Ext}(\vec u_{21})_1}{x_2}$ }
\tabularnewline[11pt]
\multicolumn{2}{ c }{Equations}
\tabularnewline
\center{$\dot { x}_1  = v	$} & \center{$\dot{  x}_2  = \frac{f}{m}$}	
\tabularnewline
\end{tabular}\\
Remark that here 
\begin{align}
P_{21}^{-v}    &=\frac{\operatorname{Ext}(\hat P)}{cs}
\\P_{12}^{-f}  &=\frac{\operatorname{Ext}(\hat P)}{v}
\end{align}
have to be calculated. For exact values, those are well defined and bounded as $P\longrightarrow 0$ if $s\longrightarrow 0$ and  if $v\longrightarrow 0$. On a computer, they are neither defined nor bounded.  One has to switch to d'Hopitals rule for calculation near denominators zeros.

\section{Stability of power balanced  schemes}
As discussed in Section \ref{stability} and shown in \cite{Moshagen2017} , stability for linear systems of a partly explicite scheme is not given. This section shall relate energy conservation of the method to stability. We give an outline of this section: 
\begin{itemize}
\item Switch to gradient flow view
\item  Introduce split system
    \begin{itemize}
      \item identify coupling contributions
     \item  Characerize potential conservation/dissipation properties
    \end{itemize}
\item  See method as decoupling  ODE 
      -- Insert calculation of inputs from power into orig. equations 
\item   relate  decoupled ODEs stability properties to stability of original systems 
   \begin{itemize}
    \item  show that negotiated exchange conserves $\dot{\mathcal P} \le 0$.
    \item  use Lyapunovs direct method 
    \item  additionaly, one can argue that maximum stable stepwidth for dissipative systems is augmented (method is closer to B-stable than Extrapolation of Inputs).    
    \end{itemize}
\item  if such stable subsystems ODEs are solved with methods preserving that stability, overall solution will be stable. 
\end{itemize}

\subsection{Structure of coupled system}

Many system's behavior is governed by conservation of some energy or by entropy related potential minimization. This behavior is described by relating states time derivative to the functional gradient of the potential, $\dot{\vec x} \sim \grad_{\vec x}\mathcal P^T$, usually linearly:
 \begin{equation}\label{gradientFlow}
\dot{\vec x} = - \ten M \grad_{\vec x}\mathcal P^T.
\end{equation}
The  properties of the \emph{mobility matrix}$ \ten M$ are, as 
\begin{equation}
\dot{\mathcal P}(x)= \scalar{\grad_{\vek x}\mathcal P( \vek  x)}{\dot{\vek x}}
= \scalar{\grad_{\vek x}\mathcal P( \vek  x)}{-\ten M\grad_{\vek x}\mathcal P( \vek x)^T},
\end{equation}
 connected to the properties of the system as follows:
\begin{enumerate}
\item for near-equilibrium potential driven processes, $\ten M $ is symmetric  due to the Onsager reciprocal relations, and has positive spectrum  due to vicinity to a stable equilibrium point, so is positive definite. Such a system moves  towards the potentials minimum, and 
\begin{equation}
\dot{\mathcal P}(x)= \scalar{\grad_{\vek x}\mathcal P( \vek  x)}{\dot{\vek x}}
= \scalar{\grad_{\vek x}\mathcal P( \vek  x)}{-\ten M\grad_{\vek x}\mathcal P( \vek x)^T}<0,
\end{equation}
accordingly $\le 0$ if $\ten M$ is pos. semi-def.. 
 See e.g. \cite{MoshDiff2011} for a detailed description and example.
\item for systems that preserve a total energy. For conservation, the systems evolution has to be perpendicular to the functional gradient, which is 
\begin{equation}
\scalar{\dot{\vec x}}{   \grad_{\vec x}\mathcal P} = \scalar{   \grad_{\vec x}\mathcal P}{-\ten M\grad_{\vek x}\mathcal P( \vek x)^T}=0,
\end{equation}
so skew or implementing a dirac structure  for conserving/Hamiltonian processes \cite{vanDerSchaft06}, \cite{vanDerSchaftMaschke03}. Examples for those are mechanical systems, for example the micromechanical force balance, see \cite[App 3]{MoshDiff2011}, or all Hamiltonian systems and those that can be seen as such, e.g. spring-mass systems as equation \eqref{eq:SpringMassHamiltonian} . 
\item Usually 
\begin{equation}
\ten M= \ten M_\text{skew}+\ten M_\text{pos.semidef},
\end{equation}
as systems have conserving as well as dissipative properties.
\end{enumerate}

Stability of methods is the conservation of the stability of the numerical solution of some IVP by the method. The class of  gradient flow problems that are in some sense stable is relevant: \\

\begin{definition}[stabilities]\label{def:LyapunovStability}
Let $\vek x^*$ be an equilibrium point of the ODE $\dot{ \vek{ x}} = f(\vek{ x})$ and $\phi^t\vek x$ the solution for the IVP with $\vek x(t_0)=\vek x$. Then $\vek x^*$ is
\begin{itemize}
\item \emph{stable} if $\forall \epsilon$ $\exists \delta > 0$ $\norm{\vek x-\vek x^*}<\delta$ $\Rightarrow \quad \norm{\phi^t \vek x - \vek x^* }<\epsilon$ $\forall t$
\item \emph{asymptotically stable} if $\exists r: \norm{\vek x-\vek x^*}<r$ $\Rightarrow \quad \lim_{t\longrightarrow\infty}\phi^t \vek x=\vek x^*$.
\end{itemize}
\end{definition}

Stability of an IVP can be proven using
\begin{theorem}[Lyapunovs direct method]\label{Th:LyapunovStability}
Let $\vek x^*$ be an eq. point of ODE.\\
Let $\mathcal P: V\longrightarrow \mathbb R^+:$ $\mathcal P(\vek x^*)=0$, $\mathcal P(\vek x)>0$
 $\forall \vek x\neq \vek x^*$ 
such that
\begin{equation}
\dot{ \mathcal P}(\Phi^t \vek x) = \scalar{\grad \mathcal P}{\dot{\Phi^t \vek x}}\le 0.
\end{equation}
 Then $\vek x^*$ is a stable equilibrium point as defined in \ref{def:LyapunovStability}. If $\dot{ \mathcal P}<0$, $x^*$ is an asymptotically stable eq. point.
\end{theorem}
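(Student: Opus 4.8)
The plan is to carry out the classical compactness argument for Lyapunov functions in two stages: first plain stability, then, under the strict decrease hypothesis, asymptotic stability. Without loss of generality I would translate coordinates so that $\vek x^* = \vek 0$, and fix $\epsilon>0$ small enough that the closed ball $\overline B_\epsilon := \{\vek x : \norm{\vek x}\le\epsilon\}$ lies inside the open domain $V$ on which $\mathcal P$ is defined.

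For stability, the sphere $S_\epsilon := \{\vek x : \norm{\vek x}=\epsilon\}$ is compact and avoids $\vek 0$, so $m := \min_{\vek x\in S_\epsilon}\mathcal P(\vek x)$ is strictly positive by continuity and positive definiteness of $\mathcal P$; continuity of $\mathcal P$ at $\vek 0$ with $\mathcal P(\vek 0)=0$ then yields $\delta\in(0,\epsilon)$ such that $\mathcal P(\vek x)<m$ whenever $\norm{\vek x}<\delta$. The heart of the argument is a barrier estimate: for $\norm{\vek x}<\delta$ the map $t\mapsto\mathcal P(\phi^t\vek x)$ is nonincreasing, since $\tfrac{d}{dt}\mathcal P(\phi^t\vek x)=\scalar{\grad\mathcal P}{\dot{\phi^t\vek x}}\le 0$ by hypothesis, hence $\mathcal P(\phi^t\vek x)\le\mathcal P(\vek x)<m$ as long as the solution exists. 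Were the trajectory ever to reach $S_\epsilon$, continuity of $t\mapsto\phi^t\vek x$ together with $\phi^0\vek x\in B_\delta\subset B_\epsilon$ would force $\mathcal P\ge m$ at that point, a contradiction; so $\phi^t\vek x$ stays in the compact set $\overline B_\epsilon$ for all $t\ge 0$, which also rules out finite-time blow-up and hence secures global existence forward in time. That is precisely the $\epsilon$--$\delta$ statement of Definition \ref{def:LyapunovStability}.

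For asymptotic stability I would additionally assume $\dot{\mathcal P}(\vek x)<0$ for $\vek x\ne\vek 0$ and take $r:=\delta$ from the previous step. For $\norm{\vek x}<r$ the orbit stays in $\overline B_\epsilon$ and $t\mapsto\mathcal P(\phi^t\vek x)$ is nonincreasing and bounded below, so it converges to some $c\ge 0$; the claim is $c=0$. If $c>0$, continuity of $\mathcal P$ at $\vek 0$ gives $\delta'>0$ with $\mathcal P<c$ on $B_{\delta'}$; since $\mathcal P(\phi^t\vek x)\ge c$ throughout, the orbit is confined to the compact annulus $K:=\{\vek y : \delta'\le\norm{\vek y}\le\epsilon\}$, on which $\dot{\mathcal P}$ is continuous and strictly negative, so that $a:=-\max_{\vek y\in K}\dot{\mathcal P}(\vek y)>0$ and $\mathcal P(\phi^t\vek x)\le\mathcal P(\vek x)-at\to-\infty$, contradicting $\mathcal P\ge 0$. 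Hence $\mathcal P(\phi^t\vek x)\to 0$; every accumulation point $\vek y^*$ of the (bounded) orbit as $t\to\infty$ then satisfies $\mathcal P(\vek y^*)=0$, i.e. $\vek y^*=\vek 0$, and a bounded orbit with the single accumulation point $\vek 0$ converges to it, giving $\phi^t\vek x\to\vek 0=\vek x^*$.

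The individual estimates are routine; the two points that genuinely need care are deducing forward global existence from confinement to $\overline B_\epsilon$ rather than assuming it, and, in the asymptotic part, extracting a \emph{uniform} decay rate of $\mathcal P$ on the annulus $K$ -- this is where compactness of $K$ together with strict negativity of $\dot{\mathcal P}$ is indispensable, and it is the step most easily glossed over in a quick write-up.
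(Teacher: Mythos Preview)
Your argument is the standard textbook proof and is correct as written; the paper itself does not prove this theorem but simply remarks that it ``is proven in numerous higher analysis textbooks,'' so there is no in-paper proof to compare against. Your compactness-barrier argument for stability and the uniform-decay-on-an-annulus argument for asymptotic stability are precisely what one finds in those references.
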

which is proven in numerous higher analysis textbooks. It immediately follows by  $\dot{\mathcal P}(x)\le 0$ that gradient flow problems of potentials that are convex around a minimum and $\ten M$ positive definite are stable in the sense of Lyapunov.
Vice versa, it can be stated that near a stable equilibrium point,  any ODE can be approximated by a gradient flow problem. Of course, all linear ODE can be seen as gradient flow problem with respect to a quadratic functional.\\
So gradient flow problems are not only widespread, but Lyapunov stable 
 systems can be approximated by gradient flow problems. For all this,  this problem class is useful for examining  stabilities of the power balancing method.


\subsubsection{Properties of Interaction in gradient flow problem seen as subsystems}

Into the potential and its gradient flow description of the system $S$ the notion of subsystems is introduced:
With $\vek x(t) \in \mathbb R^N\times \mathbb R$ being the states of $S$,
let $I_i\subset \left\{1, 2,...N\right\} $ for $i=1...k< N$ such that $I_i \cap I_j=\emptyset$ for $i\neq j$ and $\bigcup_i I_i=\left\{1, 2,...N\right\}$.\\ 
We write
\begin{itemize}
\item $(\vek x)_{I_i}$ for vector of components (indexing operator), 
\item $\vek x_{I_i}$ for vector that contains all components of $\vek x$ that are in $I_i$ and 0 elsewhere.
\end{itemize}
The subsystems $S_i$ of $S$ now are given by the $\vek x_{I_i}$ being their states.
Without loss of generality, all $I_i$ consist of subsequent numbers. Further the gradient flow context implies that what was modelled as input before e.g. in \eqref{} depends on states such that it can be expressed by  the states $\vek x_{I_j}$ and applying the replacement $\vek u_{ij}=\vek x_{I_j}$ is no further loss of generality.\\

With this, we can identify subsystems energy gain as
\begin{align}
\dot{\mathcal P_i}(x):= P_i
&= \scalar{\grad_{\vek x_{I_i}}\mathcal P( \vek  x)}{-\ten M\grad_{\vek x}\mathcal P( \vek x)^T}
\\&= \scalar{(\grad_{\vek x}\mathcal P( \vek  x))_{I_i,.}}{-(\ten M)_{I_i,.}\grad_{\vek x}\mathcal P( \vek x)^T}.
\end{align}
Using this notation, to identify the  interaction between subsystems it is split into $I_i$-block lines
\begin{align}
\dot{\mathcal P}(x)=\sum_iP_i
&= \sum_i\scalar{\grad_{\vek x_{I_i}}\mathcal P( \vek  x)}{-\ten M_{I_i,.}\grad_{\vek x}\mathcal P( \vek x)^T}
\\ \nonumber  &= \sum_i\Big(\scalar{\grad_{\vek x_{I_i}}\mathcal P( \vek  x)}{-\ten M_{I_i,I_i}\grad_{\vek x}\mathcal P( \vek x)^T}
\\&\qquad+\sum_{j\neq i}\scalar{\grad_{\vek x_{I_i}}\mathcal P( \vek  x)}{-\ten M_{I_i,I_j}\grad_{\vek x_{I_j}}\mathcal P( \vek x)^T}
\Big).\label{eq:PotProduction}
\end{align}
and then
splitting the block lines into blocks, mentioning only $k$ and $l$-containing expressions, 
\begin{align}
&\dot{\mathcal P}(x)=P_k+P_l+ ...
\\&= 
\scalar{\begin{pmatrix}. 
\\ (\grad_{\vek x}\mathcal P( \vek x))_{I_k}
\\.
\\ (\grad_{\vek x}\mathcal P( \vek x))_{I_l} 
\\.
\end{pmatrix}}
{
\begin{pmatrix}
& &*\\
... &-(\ten M)_{I_k,I_k}
& ...
& -(\ten M)_{I_k,I_l} &...
\\ & &*
\\
... &-(\ten M)_{I_l,I_k}
& ...
&-(\ten M)_{I_l,I_l} &...
\\ & &*
\end{pmatrix}
\begin{pmatrix}
. 
\\ (\grad_{\vek x}\mathcal P( \vek x))_{I_k}
\\.
\\ (\grad_{\vek x}\mathcal P( \vek x))_{I_l}
\\.
\end{pmatrix}}
\\&=\underbrace{\scalar{\grad_{\vek x_{I_k}}\mathcal P( \vek  x)}{-\ten M_{I_k,I_k}\grad_{\vek x_{I_k}}\mathcal P( \vek x)^T}}_{P_{kk}}
+\underbrace{\scalar{\grad_{\vek x_{I_k}}\mathcal P( \vek  x)}{-\ten M_{I_k,I_l}\grad_{\vek x_{I_l}}\mathcal P( \vek x)^T}}_{P_{kl}}\nonumber
\\&+\underbrace{\scalar{\grad_{\vek x_{I_l}}\mathcal P( \vek  x)}{-\ten M_{I_l,I_l}\grad_{\vek x_{I_l}}\mathcal P( \vek x)^T}}_{P_{ll}}
+\underbrace{\scalar{\grad_{\vek x_{I_l}}\mathcal P( \vek  x)}{-\ten M_{I_l,I_k}\grad_{\vek x_{I_k}}\mathcal P( \vek x)^T}}_{P_{lk}},\nonumber
\\ &\qquad + ...
\end{align}
we identify 
\begin{equation}\label{eq:P_kl}
P_{kl}:=\scalar{(\grad_{\vek x}\mathcal P( \vek x))_{I_k}}{-(\ten M)_{I_k,I_l}(\grad_{\vek x}\mathcal P( \vek x)^T)_{I_l}}
\end{equation}
as the \emph{potential production}  in $S_k$ by $S_l$s variables, 
or power acting from subsystem $l$ onto subsystem $k$ and 
\begin{equation}\label{eq:P_kk}
P_{kk}:=\scalar{(\grad_{\vek x}\mathcal P( \vek x))_{I_k}}{-(\ten M)_{I_k,I_k}(\grad_{\vek x}\mathcal P( \vek x)^T)_{I_k}}
\end{equation}
as $S_k$s internal potential change.

\subsubsection{Properties of Potential Production contributions}
Let, as discussed above, $\ten M=\ten M_\text{symm}+\ten M_\text{skew}$. Due to \eqref{eq:P_kk}, $P_{kk}$ is
\begin{itemize}
\item energy absorbing iff $\rho(\ten M_{\text{symm},kk})\subset \mathbb R^+$ and $\exists \lambda_i> 0$ 
\item energy conserving iff $\ten M_{kk}=\ten M_{\text{skew},kk}$: It is then a Hamiltonian system with inputs. 
\end{itemize}
If $\ten M$ is symmetric, then 
\begin{equation}\label{eq:PotProdSym}
\begin{aligned}
P_{kl}&= -\scalar{(\grad_{\vek x}\mathcal P( \vek  x))_{I_k}}{(\ten M)_{I_k,I_l}(\grad_{\vek x}\mathcal P( \vek x)^T)_{I_l}}
\\&= -\scalar{(\ten M)_{I_k,I_l}^T(\grad_{\vek x}\mathcal P( \vek  x))^T_{I_k}}{(\grad_{\vek x}\mathcal P( \vek x))_{I_l}}
\\&=-\scalar{(\ten M)_{I_k,I_l}(\grad_{\vek x}\mathcal P( \vek  x))^T_{I_k}}{(\grad_{\vek x}\mathcal P( \vek x))_{I_l}}\qquad \text{ (symmetry)}
\\&=-\scalar{(\grad_{\vek x}\mathcal P( \vek x))_{I_l}}{(\ten M)_{I_k,I_l}(\grad_{\vek x}\mathcal P( \vek  x))^T_{I_k}}=P_{lk}.
\end{aligned}
\end{equation}
which can be expressed in words as: The states adjacent to the $S_l$--$S_k$ boundary influence Potential on both subsystems equally. For example, when coupling two spatial domains with a PDE-governed field on them, this means that DOFs that  neighbour the other domain evolve such that the potential production (e.g. entropy production) is equal for both domains.
\\If $\ten M$ skew, then analogously 
\begin{equation}\label{eq:PotProdSkew}
P_{kl}=- P_{lk}
\end{equation}
This implements a real potential flow across the boundary: $S_k$ takes what $S_l$ loses. Those flows cancel out in the overall potential production \eqref{eq:PotProduction}.
\\
If the mobility matrix has both nonzero $\ten M_\text{symm}$ and $\ten M_\text{skew}$ contribution, the potential flow has real flow contributions that are no productions and balanced production contributions contributions .

\begin{remark}[Relation to Port Hamiltonian systems]
The setting in which $S$ is given above makes it a \emph{Port-Hamiltonian System} according to \cite{vanDerSchaft06}, which is a system of the shape
\begin{equation}\label{eq:PortHamiltonian}
\dot{\vec x}= \left(\ten J - \ten R\right)\grad_{\vek x}\mathcal P( \vek x) + \ten G\vek u,
\end{equation}
with skew $\ten J $ that corresponds to $\ten M_\text{skew}$, and $\ten R$ is symmetric positive definite and corresponds to $-\ten M_\text{symm}$.
 If a $P_{kk}$ is
\begin{itemize}
\item energy absorbing, i. e. $\ten M_{kk}$  is symmetric postive definite, then it is a dissipative port
\item energy conserving, i.e.  $\ten M$ skew, it is a Port-Hamiltonian subsystem. 
\end{itemize}
\end{remark}

\subsubsection{Structure of decoupled and power balanced system}

We now derive the ODE induced by cosimulation scheme from the original ODE by replacing $S_i$s inputs $\vek x_{I_j}$, $j \neq i$, by the explicit expression replacing them:
From directly inserting $\operatorname{Ext}(\vek x_{I_j})$ into $\dot{\vec x} = - \ten M \grad_{\vec x}\mathcal P$ 
\begin{equation}\label{eq:ExtrapInducedODE}
\dot{ \vek x}_\text{Extrap} = \sum_i \ten M_{I_i,.}\left(\grad_{\vek x}\mathcal P(\vek x_{I_i}, \operatorname{Ext}((\vec x_{I_j})_{j\neq i})\right)
\end{equation}
results, which is no gradient flow any more and in general will be instable ( see section \ref{stability}).\\
Analogously, power balanced method induces an ODE:
\begin{equation}\label{eq:BalanceInducedODE}
\dot{ \vek x}_\text{bal} = \sum_i\dot{ \vek x}_{I_i}=\sum_i \ten M_{I_i,.}  \grad_{\vek x}\mathcal P(\vek x_{I_i}, (\operatorname{Ext}(\vec x_{I_j\setminus j}), P_{ij}^{-(\vec x)_j}(\operatorname{Ext}\hat P_{ij}))_{j\neq i})
\end{equation}
The index $j$ is, without loss of generality, used to denote the component of $\vec x_{I_j}$ that was replaced by $P_{ij}$. The argument $(\operatorname{Ext}(\vec x_{I_j\setminus j}), P_{ij}^{-(\vec x)_j}(\operatorname{Ext}\hat P_{ij}))_{j\neq i}$ is a vector, as in fact as many $j \neq i$  as couplings appear in the argument. For readability, $\operatorname{Ext}(\vec x_{I_j\setminus j})$ will be omitted from now on.

 \subsubsection{Stability inheritance}
 
Before it is sketched how from properties of the method induced ODE the stability properties of our scheme is derived, remember the stability concepts:\\
First, from  the dissipativity of an equations right hand side
\begin{equation}\label{eq:dissipativeRHS}
\scalar{\vek f (\vek x)- \vek f(\overline{\vek x}) }{\vek x-\overline{\vek x} }\le 0
\end{equation}
 follows the \emph{non-expansivity} of its evolution $\vec \Phi^\tau \vec x$
\begin{equation}
\norm{\vek \Phi^\tau\vek x- \vek \Phi^\tau\overline{\vek x} }\le \norm{\vek x-\overline{\vek x} }\qquad \text{for all } \vek x,\,\overline{\vek x}
\end{equation}
(see \cite[th. 6.49 ]{DeuflhardBornemann94}), which if inherited to numerical solution is the \emph{B-stability} of the method (\cite[Section 6.3]{DeuflhardBornemann94}). \\
Second,
remember A-Stability and its vector valued generalization, the linear stability as discussed in Section \ref{stability}. Remember further Definition of Lyapunov stability 
 \ref{def:LyapunovStability}.

We now use the following obvious arguing:  Let original ODE
\begin{equation}
\begin{cases}
\text{have a stable point at } x^*
\\ \text{be linear and stable}
\\ \text{be dissipative.}
\end{cases}
\end{equation}
Then, if ODE induced by method inherits this property, which means it still has a stable point at $x^*$/ is still linear and stable/dissipative, 
all subproblems $\dot{\vek x}_{I_i}$ have that property.\\
Then all subproblems numerical solution $\Psi{\vek x}_{I_i}$ is stable  if solved with a
\begin{equation}
\begin{cases}
\text{ stable for that } \vek x^*_{I_i}
\\ \text{A-stable}
\\ \text{B-stable}
\end{cases}
\text{ method.}
\end{equation}
Then the 
overall solution is stable and so the split method is
\begin{equation}
\begin{cases}
\text{stable at }\vek  x^*
\\ \text{A-stable}
\\ \text{B-stable}
\end{cases}
\end{equation}
if the methods applied on the subsystems are.
Stability of power balanced scheme now is shown using 
Lyapunovs direct method, Theorem \ref{Th:LyapunovStability}.\\
By applying this method one gains the stability result for the power balanced method:
\begin{theorem}
For a Lyapunov stable (asymptotically stable) initial value problem (IVP), the IVP resulting from the energy balancing method is also stable (asymptotically stable). 
\end{theorem}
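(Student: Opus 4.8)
The plan is to use the potential $\mathcal P$ of the gradient-flow description \eqref{gradientFlow} as a Lyapunov function for the ODE \eqref{eq:BalanceInducedODE} induced by the power-balanced scheme, and then to invoke Theorem \ref{Th:LyapunovStability}. First I would bring the given IVP into the form \eqref{gradientFlow} about its stable equilibrium $\vec x^*$ -- either because it is posed that way, or via the approximate converse to Lyapunov's theorem recalled in this section -- so that $\mathcal P(\vec x^*)=0$, $\mathcal P(\vec x)>0$ for $\vec x\neq\vec x^*$, and $\dot{\mathcal P}=\scalar{\grad_{\vec x}\mathcal P}{-\ten M\grad_{\vec x}\mathcal P^T}\le 0$ along the original solutions. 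Writing $\scalar{v}{-\ten M v}=\scalar{v}{-\ten M_\text{symm}v}$ and letting $v$ range over the (locally full) image of $\grad_{\vec x}\mathcal P$ shows that $\ten M_\text{symm}$ is positive semidefinite near $\vec x^*$; hence so is every principal block $\ten M_{I_i,I_i}$, and the internal productions $P_{ii}$ of \eqref{eq:P_kk} satisfy $P_{ii}\le 0$.

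Next I would differentiate $\mathcal P$ along a trajectory of \eqref{eq:BalanceInducedODE} and split it with the block decomposition \eqref{eq:PotProduction}. The internal blocks $P_{ii}$ are formed from subsystem $S_i$'s own exact states and the internal part of $\ten M$, so they agree with those of the original system and are $\le 0$ by the previous step (this is cleanest when $\mathcal P$ splits additively over the subsystems, which is the Port-Hamiltonian / separable-energy case the scheme targets). For the coupling blocks I would invoke the defining property of the method: by \eqref{extrapolationU_i} the replaced input component of $\vec u_{ij}$ is chosen so that the power realized in $S_i$ across the $S_i$--$S_j$ interface equals $\operatorname{Ext}(\hat P_{ij})$ exactly, while the negotiation step imposes $\hat P_{ij}=-\hat P_{ji}$. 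Since $\operatorname{Ext}$ is linear, the two boundary contributions of an interface cancel, $P_{ij}+P_{ji}=\operatorname{Ext}(\hat P_{ij})+\operatorname{Ext}(\hat P_{ji})=0$, so all cross terms vanish pairwise and $\dot{\mathcal P}=\sum_i P_{ii}\le 0$ holds for the induced ODE as well. I would stress that this uses only the balance $\hat P_{ij}=-\hat P_{ji}$, not the accuracy of the extrapolation: the scheme trades accuracy of the exchanged power for exact instantaneous conservation, and that is precisely what keeps $\dot{\mathcal P}\le 0$. I would also check that $\vec x^*$ is still an equilibrium of \eqref{eq:BalanceInducedODE} -- at $\vec x^*$ the exchanged signals are stationary, so $\operatorname{Ext}$ is exact, the inversion $P_{ij}^{-\vec u_{k}}$ returns the true input component, and $\dot{\vec x}=-\ten M\grad_{\vec x}\mathcal P(\vec x^*)^T=0$; in the two-subsystem case this is immediate from the idle-negotiation remark.

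With $\mathcal P$ positive definite about $\vec x^*$ and $\dot{\mathcal P}\le 0$ along \eqref{eq:BalanceInducedODE}, Theorem \ref{Th:LyapunovStability} yields Lyapunov stability of $\vec x^*$ for the energy-balanced IVP. For the asymptotic case I would start from the strict hypothesis $\dot{\mathcal P}<0$ for $\vec x\neq\vec x^*$, i.e. $\ten M_\text{symm}$ positive definite, so that every principal block $\ten M_{I_i,I_i}$ is positive definite; since $\grad_{\vec x}\mathcal P$ vanishes only at the strict minimum $\vec x^*$, for any $\vec x\neq\vec x^*$ at least one block gradient $\grad_{\vec x_{I_i}}\mathcal P$ is nonzero, which forces $\dot{\mathcal P}=\sum_i P_{ii}<0$, and Theorem \ref{Th:LyapunovStability} then gives asymptotic stability. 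Chaining through the stability-inheritance scheme of this section upgrades this to the full cosimulation: each decoupled subsystem ODE $\dot{\vec x}_{I_i}$ inherits the (asymptotic) stability, so if the subsolvers preserve it -- A-stable in the linear case, B-stable for the dissipative part, respectively a method stable at $\vec x^*_{I_i}$ -- the overall numerical solution is stable.

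The step I expect to be the main obstacle is the coupling identity: one must verify that the quantity the scheme constrains in \eqref{extrapolationU_i}, namely ``the power $S_i$ reports across the $S_i$--$S_j$ boundary after reconstruction'', is precisely the gradient-flow block $P_{ij}=\scalar{(\grad_{\vec x}\mathcal P)_{I_i}}{-(\ten M)_{I_i,I_j}(\grad_{\vec x}\mathcal P^T)_{I_j}}$ entering $\dot{\mathcal P}$, and that this remains true once one component of the input vector has been overwritten by $P_{ij}^{-\vec u_{k}}$. This forces the well-posedness assumptions already flagged -- monotonicity of $P_{ij}$ in the replaced component so the inversion exists and is single-valued, and the finite (removable) behavior of $P_{ij}^{-\vec u_{k}}$ near zeros of the denominators -- and, in the genuinely non-idle negotiation case with three or more subsystems, a separate argument that $\vec x^*$ is preserved as an equilibrium. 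Those are where the real work of a rigorous proof would go; everything else is bookkeeping with the block decomposition and Theorem \ref{Th:LyapunovStability}.
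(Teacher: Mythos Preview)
Your approach is essentially the same as the paper's: compute $\dot{\mathcal P}$ along the trajectory of the balanced ODE \eqref{eq:BalanceInducedODE}, split it via the block decomposition into internal terms $P_{ii}$ and interface terms, use the defining constraint \eqref{extrapolationU_i} together with $\hat P_{ij}=-\hat P_{ji}$ to make the interface contributions cancel pairwise, and then invoke Theorem~\ref{Th:LyapunovStability} from $\dot{\mathcal P}=\sum_i P_{ii}\le 0$. You are in fact more careful than the paper on several points it leaves implicit---why the principal blocks $\ten M_{I_i,I_i}$ inherit semidefiniteness, why $\vec x^*$ remains an equilibrium of the induced ODE, the separability caveat $\mathcal P_i=\mathcal P_i(\vec x_{I_i})$ needed for the $P_{ii}$ to coincide with those of the original system, and the strict-inequality argument in the asymptotic case---so your write-up would stand as a cleaner version of the same proof.
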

\begin{proof}
Inserting \eqref{eq:BalanceInducedODE} for $\dot{\vek x}$,
\begin{equation}
\begin{split}
\dot{\mathcal P}(\Phi^t_\text{bal}\vek x)=\scalar{\grad_{\vek x}\mathcal P( \Phi^t\vek x)}{\dot{ \Phi^t\vek x)}}
\\=\scalar{\grad_{\vek x}\mathcal P( \Phi^t\vek x)}{\sum_i \ten M_{I_i,.}  \grad_{\vek x}\mathcal P( \Phi^t \vek x_{I_i}, (P^{-(\vec x)_j}(\operatorname{Ext}\hat P_{i,j}))_{j\neq i})}  
\\=\sum_i\scalar{\left(\grad_{\vek x}\mathcal P( \Phi^t\vek x)\right)_{I_i}}{ (\ten M)_{I_i,I_i}  \left(\grad_{\vek x}\mathcal P( \Phi^t \vek x_{I_i}, (P^{-(\vec x)_j}(\operatorname{Ext}\hat P_{i,j}))_{j\neq i})\right)_{I_i}}
\\+\sum_i\sum_{j\neq i}\scalar{\left(\grad_{\vek x}\mathcal P( \Phi^t\vek x)\right)_{I_i}}{ (\ten M)_{I_i,I_j}  \left(\grad_{\vek x}\mathcal P( \Phi^t \vek x_{I_i}, (P^{-(\vec x)_j}(\operatorname{Ext}\hat P_{i,j}))_{j\neq i})\right)_{I_j}}
\\
=\sum_iP_{ii}+\sum_i\sum_{j\neq i}\hat P_{ij}
\end{split}
\end{equation}
results, the  last equality by identifying subsystem internal production and exchange:
Potential flow, as always, is:  
$$
P_{\text{bal},ij}=\scalar{\left(\grad_{\vek x}\mathcal P( \Phi^t\vek x)\right)_{I_i}}{ (\ten M)_{I_i,I_j}  \left(\grad_{\vek x}\mathcal P( \Phi^t \vek x_{I_i}, (P^{-(\vec x)_j}(\operatorname{Ext}\hat P_{i,j}))_{j\neq i})\right)_{I_j}}
$$
and by construction of the power negotiating method, without loss of generality $P_{\text{bal},ij}=\hat P_{ij}$
 and $P_{\text{bal},ji}= -\hat P_{ij}$ - the $x_j=P^{-(\vec x)_j}(\operatorname{Ext}\hat P_{i,j})$ are determined with respect to that. So the second sum cancels out,
\begin{equation}\label{eq:dotPbal}
\dot{\mathcal P}(\Phi^t_\text{bal}\vek x)=\sum_iP_{ii}\le 0,
\end{equation}
 as $P_{ii}\le0$ for all $i$. So theorem \ref{Th:LyapunovStability} can be applied.
\end{proof}
\paragraph{Remark} The cancellation $\sum_i\sum_{j\neq i}\hat P_{ij}=0$ 
due to $\hat P_{ji}= -\hat P_{ij}$  obviously is consistent to the cancellation of real flows due to $\ten M_{skew}$ in the   original model - its flows also balance according to \eqref{eq:PotProdSkew}: $ P_{ji}= - P_{ij}$.\\
Is it consistent with original model for the flows due to the symmetric part of $\ten M$ like in \eqref{eq:PotProdSym}?\\
Symmetric part is  the potential production on subsystem boundary DOFs, flowing equally into both subsystems. In the original ODE, $ P_{ji}=  P_{ij}$, and  from negative semidefiniteness of $\dot{\mathcal P}$,  the restriction
\begin{equation}
P_{ii}+P_{jj} \le -P_{ij}-P_{ji}. 
\end{equation}
holds. In power balanced scheme, it is replaced by
\begin{equation}
P_{\text{bal},ij}+P_{\text{bal},ji}=2P_{\text{bal},ij}=\hat P_{ji}+\hat P_{ij}=0.
\end{equation}
This means: Using power balanced decoupling scheme here is a  change in the model -- a bit of damping is lost --
 leading to a small but $O(1)-$\emph{Modeling error  in this case!}
\subsection{Towards B-stability - inheritance of dissipativity to the method induced ODE?}

For a system that is governed by \eqref{gradientFlow} for convex $\mathcal P$ one can evaluate  the dissipativity relation \eqref{eq:dissipativeRHS} as follows
\begin{multline}\label{dissipativityOfGrad}
 \scalar{\vec f (\vec x)-\vec f(\overline{\vec x})}{\vec x - \overline{\vec x}}
 =\scalar{-\ten M\left(\grad_{\vec x}\mathcal P (\vec x)-\grad_{\vec x}\mathcal P (\overline{\vec x})\right)}{\vec x - \overline{\vec x}}\\
= -\scalar{\int\limits_0^1\ten M\left( D^2_{\vec x}\mathcal P (\vec x + \theta(\overline{\vec x}-\vec x))\right)(\vec x - \overline{\vec x})\,d\theta}{\vec x - \overline{\vec x}}
\\= -\int\limits_0^1 \scalar{\ten M\left( D^2_{\vec x}\mathcal P (\vec x + \theta(\overline{\vec x}-\vec x))\right)(\vec x - \overline{\vec x})}{\vec x - \overline{\vec x}}\,d\theta.
\end{multline}
This is lower than 0 for all $\vek x$ and  $\overline{\vek x}$ if and only if $\ten M D^2_{\vec x}\mathcal P$ is positive definite for all  $\vek x$, lower or equal 0 for positive semidefiniteness.
 The Hesse matrix $D^2_{\vec x}\mathcal P$  is positive definite as of a convex functional. 
The result is stated formally:\\
\begin{lemma}\label{dissipativityOfGradTh}
Given a system of ODEs on whose states $\vek x$ an energy  functional $\mathcal P(\vek x)$ is defined, 
let the ODEs be defined by its gradient flow $\grad_{\vec x} \mathcal P $  by   
\begin{align}
\dot{\vec x} &= - \ten M \grad_{\vec x}\mathcal P\\
\vec x(t_0) &=\vec x_0.
\end{align}
If the right hand side $- \ten M \grad_{\vec x}\mathcal P$ is dissipative, i.e. 
\begin{equation}
\scalar{-\ten M\grad_{\vec x}\mathcal P (\vec x)-(-\ten M)\grad_{\vec x}\mathcal P (\overline{\vec x})}{\vec x - \overline{\vec x}}\le 0,
\end{equation}
then $\ten M\ten D^2 \mathcal P$ is positive semi-definite.
Vice versa, if $\ten M\ten D^2 \mathcal P$ is positive definite, the right hand side $- \ten M \grad_{\vec x}\mathcal P$ is dissipative.\\
Inequality implies positive definiteness and vice versa.
\end{lemma}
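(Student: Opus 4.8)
The plan is to turn the dissipativity inequality into a pointwise statement about the matrix $\ten M\, D^2_{\vec x}\mathcal P$ by means of the fundamental theorem of calculus applied to $\grad_{\vec x}\mathcal P$, exactly as already foreshadowed by the chain of equalities in \eqref{dissipativityOfGrad}. First I would record, for arbitrary states $\vec x$ and $\overline{\vec x}$ and using that $\mathcal P\in C^2$ (so $\grad_{\vec x}\mathcal P$ is $C^1$), the identity
\begin{multline}\label{eq:propFTC}
\scalar{-\ten M\grad_{\vec x}\mathcal P(\vec x)+\ten M\grad_{\vec x}\mathcal P(\overline{\vec x})}{\vec x-\overline{\vec x}}\\
= -\int_0^1 \scalar{\ten M\, D^2_{\vec x}\mathcal P\bigl(\vec x+\theta(\overline{\vec x}-\vec x)\bigr)(\vec x-\overline{\vec x})}{\vec x-\overline{\vec x}}\,d\theta ,
\end{multline}
which is the sole engine of the argument; everything else is reading signs off it.

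The direction ``$\ten M\, D^2_{\vec x}\mathcal P$ positive (semi-)definite $\Rightarrow$ the right hand side is dissipative'' is then immediate: if $\ten M\, D^2_{\vec x}\mathcal P$ is positive semidefinite at every point, each integrand in \eqref{eq:propFTC} is $\ge 0$, hence the left hand side is $\le 0$; if it is positive definite and $\vec x\neq\overline{\vec x}$, the continuous integrand is strictly positive on $[0,1]$ and therefore has strictly positive integral, yielding the strict dissipativity inequality.

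For the converse, ``dissipative $\Rightarrow$ $\ten M\, D^2_{\vec x}\mathcal P$ positive semidefinite'', I would argue by localisation. Fixing $\vec x$ and a direction $\vec v$ and setting $\overline{\vec x}=\vec x+t\vec v$, so that $\vec x-\overline{\vec x}=-t\vec v$, the dissipativity assumption together with \eqref{eq:propFTC} reads
\begin{equation}\label{eq:propLoc}
0\ \ge\ -t^{2}\int_0^1 \scalar{\ten M\, D^2_{\vec x}\mathcal P(\vec x+\theta t\vec v)\,\vec v}{\vec v}\,d\theta .
\end{equation}
Dividing by $-t^{2}<0$ and letting $t\to 0$, the integrand converges uniformly (continuity of $D^2_{\vec x}\mathcal P$ on a neighbourhood of $\vec x$) to the constant $\scalar{\ten M\, D^2_{\vec x}\mathcal P(\vec x)\,\vec v}{\vec v}$, which must therefore be $\ge 0$; as $\vec x$ and $\vec v$ were arbitrary, $\ten M\, D^2_{\vec x}\mathcal P$ is positive semidefinite everywhere. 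I expect this step — passing from the averaged inequality \eqref{eq:propFTC}, which constrains only an integral over a whole segment, to a pointwise quadratic-form inequality — to be the main obstacle, and the rescale-then-take-the-limit device in \eqref{eq:propLoc} is what resolves it, at the mild cost of invoking $C^2$ regularity of $\mathcal P$. The last assertion, equivalence of the strict dissipativity inequality with positive definiteness of $\ten M\, D^2_{\vec x}\mathcal P$, I would establish in the case relevant for the linear stability discussion of Section \ref{stability}, namely $\mathcal P$ quadratic: then $D^2_{\vec x}\mathcal P$ is constant, the integral in \eqref{eq:propFTC} collapses to a single quadratic form, and the two statements literally coincide.
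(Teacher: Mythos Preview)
Your approach is essentially the paper's: the identity \eqref{eq:propFTC} is exactly the chain \eqref{dissipativityOfGrad} that precedes the lemma, and the paper then simply reads off the sign equivalence from it. You go slightly beyond the paper by spelling out the localisation step $\overline{\vec x}=\vec x+t\vec v$, $t\to 0$, for the converse direction, which the paper merely asserts; this is a welcome addition and your honest restriction of the strict case to quadratic $\mathcal P$ is more careful than the paper's bare claim.
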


 A big subclass of split gradient flow systems is given by systems $S=\cup {S_i}$ in which all couplings are due to $\ten M_\text{skew}$, and all subsystems energies are due to subsystem variables only, which is $\mathcal P_i= \mathcal P_i(\vek x_i)$. Example \eqref{eq:SpringHamiltonianEnergy} gives a hint that the class of problems for which the assumptions are valid is relevant. 
 For this class it holds that 
\begin{equation}
\dot{\mathcal P}(\vek x)=\sum_iP_{ii}=\dot{\mathcal P}_\text{bal}(\vek x).
\end{equation}
The first equality follows by $P_{kl}=- P_{lk}$ \eqref{eq:PotProdSkew} from
\begin{equation}
\dot{\mathcal P}(\vek x)=\sum_iP_{ii}+\sum_i\sum_{j\neq i} P_{ij}
\end{equation}
the second is equation \eqref{eq:dotPbal} - saying, by $\mathcal P_i= \mathcal P_i(\vek x_i)$ and chancellation of flows, the potential production of the power balanced method is independent of extrapolations. Moreover, if written as scalar product using $P_{ii} =\scalar{\grad_{\vek x_{I_i}}\mathcal P_i( \vek  x)}{-\ten M\grad_{\vek x}\mathcal P_i( \vek x_{I_i})^T}$,
\begin{equation}
\scalar{\grad_{\vek x}\mathcal P( \vek  x)}{-\ten M\grad_{\vek x}\mathcal P( \vek x)^T}=\scalar{\grad_{\vek x}\mathcal P( \vek  x)}{-\sum_i\ten M_{I_iI_i}\grad_{\vek x}\mathcal P( \vek x)^T},
\end{equation}
which says that the projection of $\dot{\vek x}$ and $\dot{\vek x}_\text{bal}$ onto the gradient of the potential is the same for the ODE and the balanced method induced ODE.\\
This projection property  is independent of extrapolation, the property is given for all exchange step lengths if given for one.\\
As before, it here becomes obvious that for $\ten M= \ten M_{symm}+ \ten M_{skew}$ only $\ten M_{symm}$ induces potential production, the skew part has no influence on it.

\section{Comparison of numerical results and Discussion}
\label{}
\subsection{Convergence}

%

\begin{figure}
\includegraphics[ width=0.8\textwidth ]
{./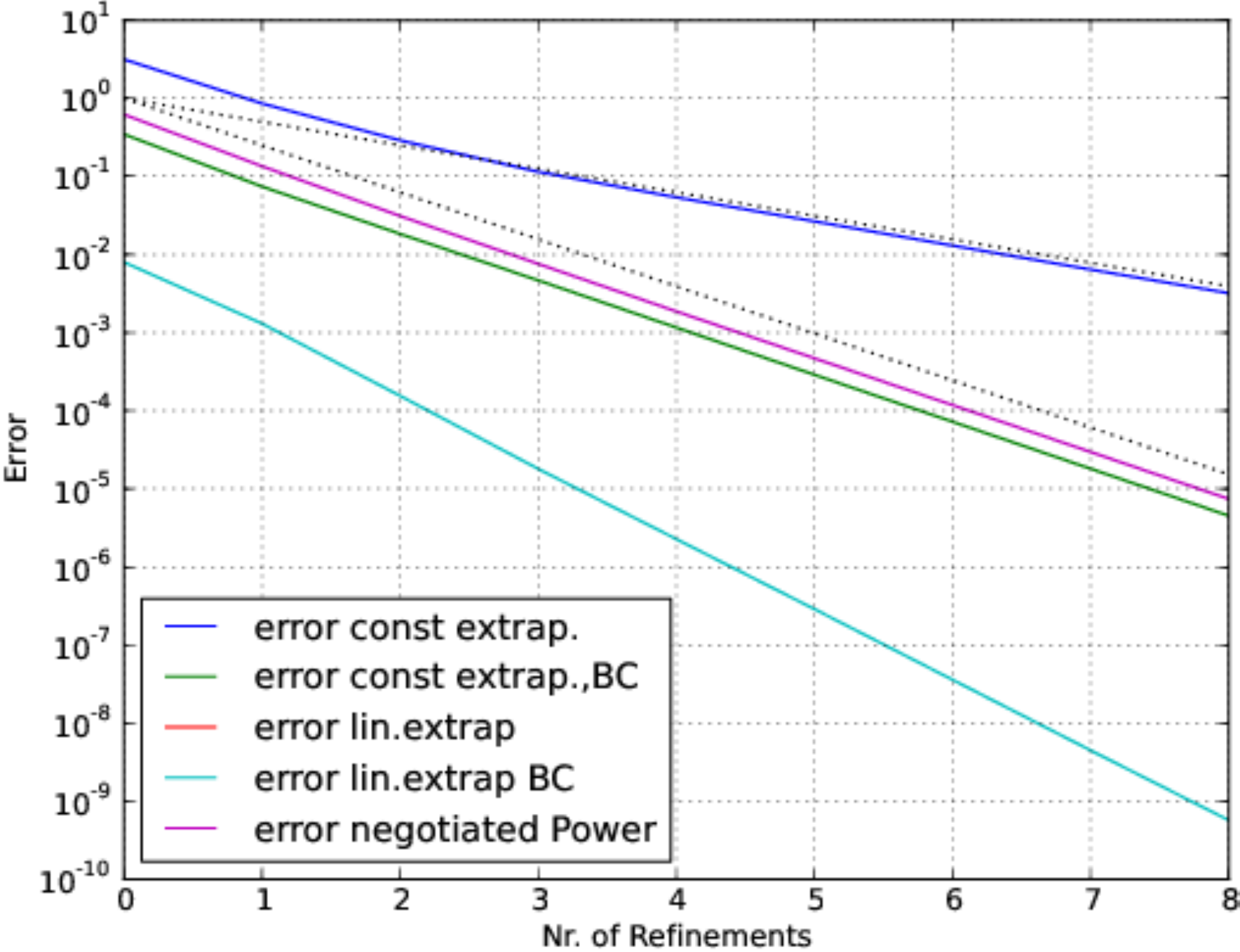}
\caption{\label{comparisonPlotConvergence} Convergence, $T_\text{end}=20$, subsystems refinement decisions left to subsystems solvers, \emph{dopri5} used on subsystems. In this setting, extrapolation error dominates.   }
\end{figure}


Simulations of the spring-mass problem \eqref{springMass} in its splitted form  as described in Table \ref{CosimSchemesSpringMass}, moreover  with  balance correction as in Table \ref{tab:cosimSchemeBC}, \cite{Scharff12}, \cite{ScharffMoshagen2017} and by negotiated power scheme (section \ref{enforcingEnergyBalance}) have been executed. Expressed as gradient flow problem, more precisely as a Hamiltonian system, as in \eqref{eq:PortHamiltonian} ,
it is written as 
\begin{equation}\label{eq:SpringMassHamiltonian}
\begin{pmatrix}
\dot{\vec q}\\ 
\dot{\vec p} 
\end{pmatrix}
=\begin{pmatrix}
0   & 1\\
-1  & 0
\end{pmatrix}
\begin{pmatrix}
\frac{\partial \mathcal H}{\partial \vec q}\\
\frac{\partial \mathcal H}{\partial \vec p}
\end{pmatrix}
=\begin{pmatrix}
0   & 1\\
-1  & 0
\end{pmatrix}
\begin{pmatrix}
cx\\
v
\end{pmatrix}.
\end{equation}
where the energy of this system is
\begin{equation}\label{eq:SpringHamiltonianEnergy}
\mathcal H = \frac{1}{2}  mv^2 +\frac{1}{2} \sum cx^2
= \frac{1}{2}  p^2/m +\frac{1}{2}  cq^2,
\end{equation}
 and so
\begin{align}
\frac{\partial \mathcal H}{\partial q} = cq \\
\frac{\partial \mathcal H}{\partial p} =\frac{1}{m}p = v. 
\end{align}
As Hamiltonian system, this system has skew mobility matrix, is energy conserving and has a stable solution.\\
for the series of exchange step sizes $H={0.2,0.1,0.05...}$. As a subsystem solver \emph{dopri5} was used, a  one-step method is necessary due to reasons that will follow. In this setting, the extrapolation error dominates $\epsilon_\Delta$.\\
Plot \ref{comparisonPlotConvergence} shows that convergence rates predicted by \eqref{convErrBoundingFunction} are met, also by the negotiated power method, which has extrapolation order 2. The error  of the  balance correction method behaves better than predicted by Theorem \eqref{th:convergenceBC}: it reduces approximately by 1/8 in each refinement step for linear extrapolation, and 1/4 for constant extrapolation. In the derivation of the estimate, the balance correction was treated as an arbitrary perturbation - in fact, it reduces the extrapolation error, and the numerical experiment suggests that this reduction leads to a gain of one in the order of convergence.
%
\begin{figure}
\includegraphics[ width=0.3\textwidth ]
{./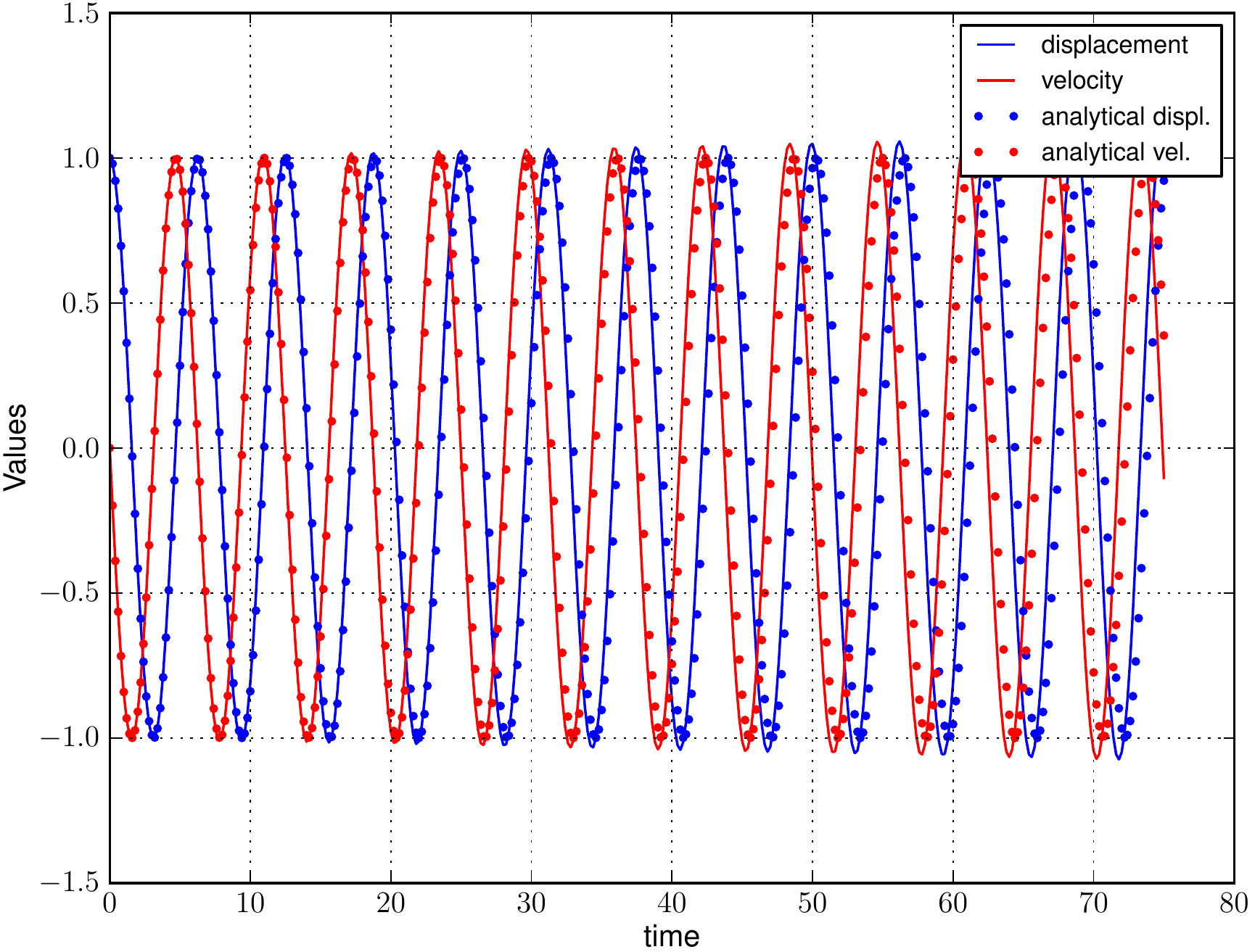}
\includegraphics[ width=0.3\textwidth ]
{./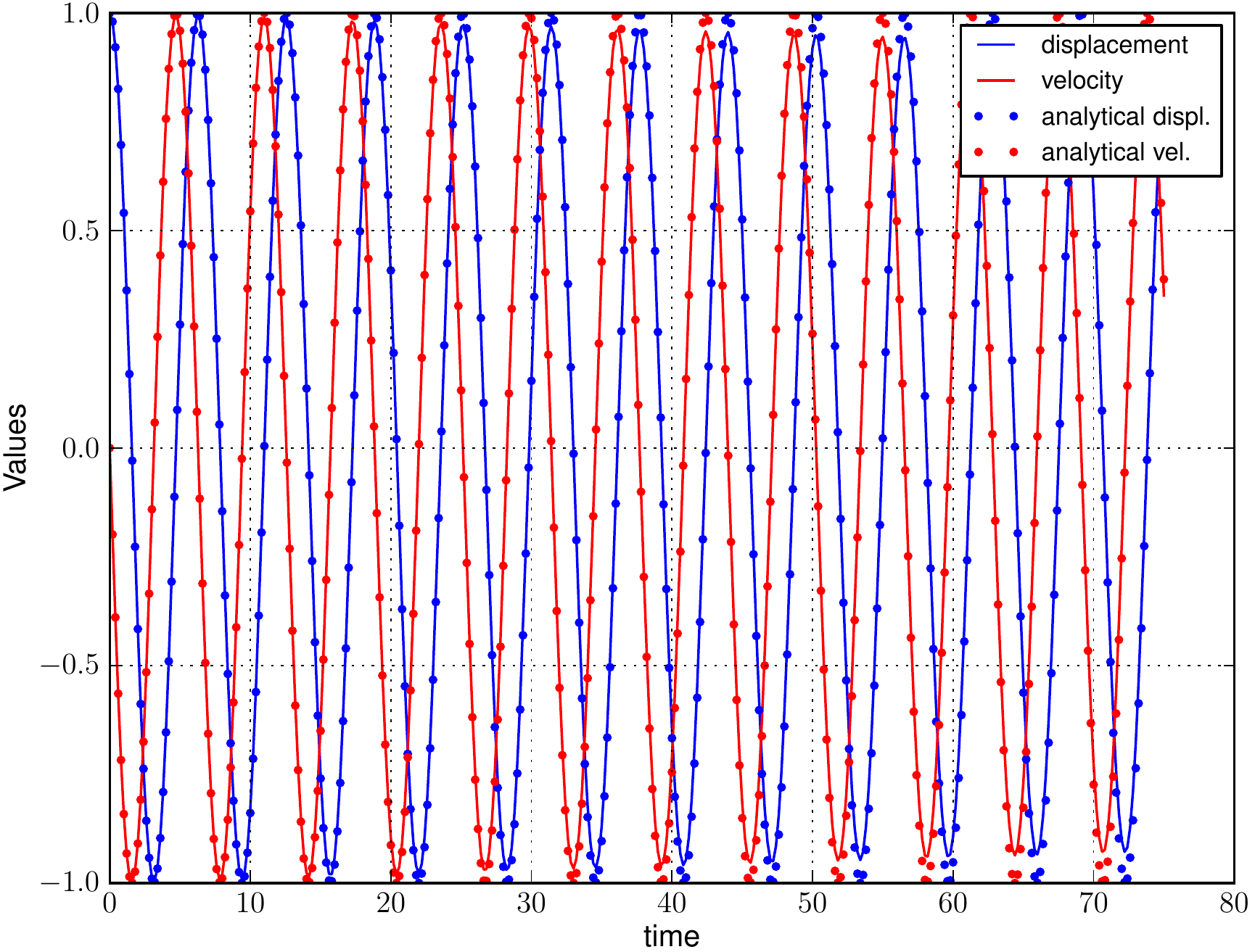}
\includegraphics[ width=0.3\textwidth ]
{./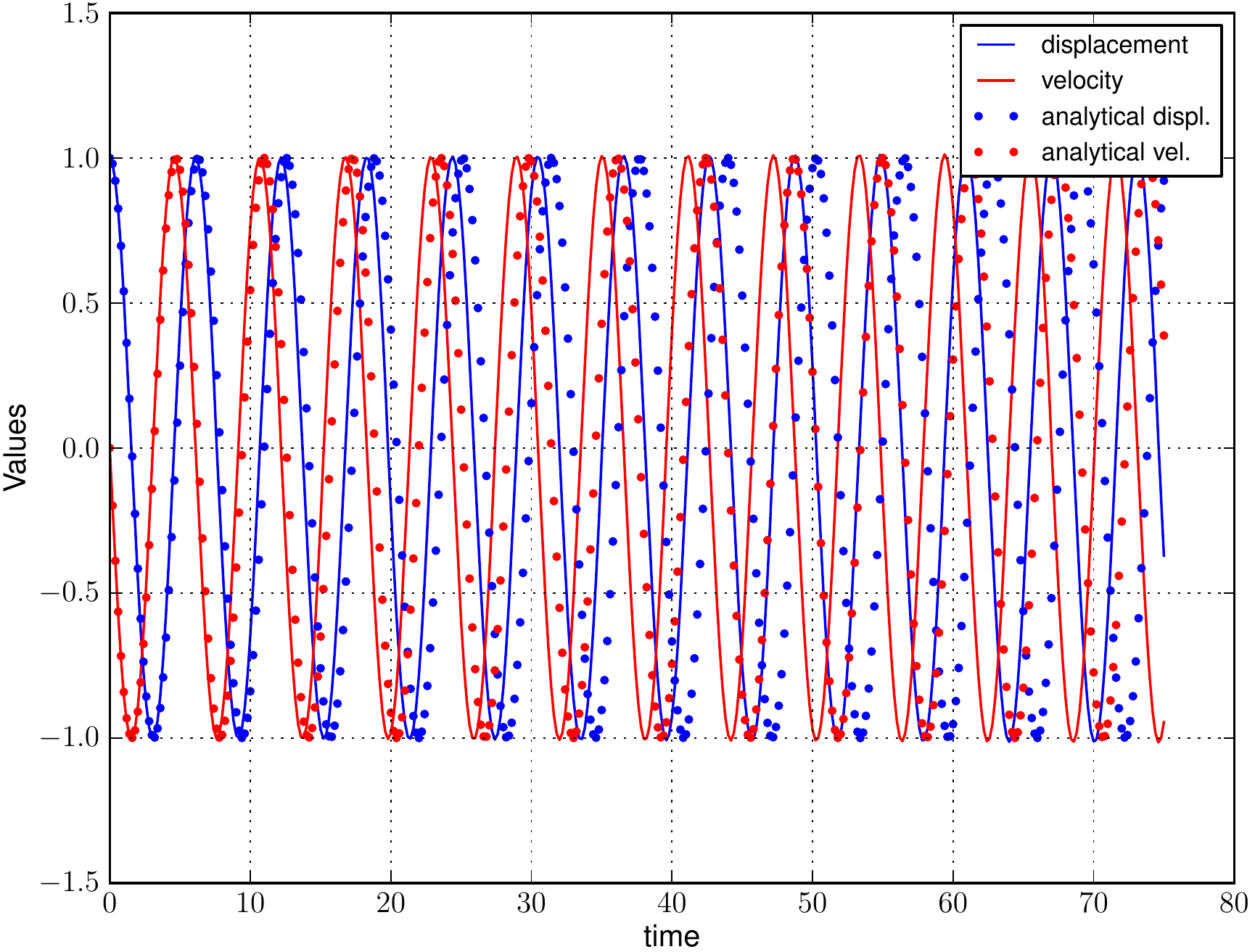}
\caption{\label{fig:stability_h20} Stability of cosimulation schemes apllied to spring-mass system: Left: Linear extrapolation, middle: Linear extrapolation with balance correction, right: Power balanced scheme. $T_\text{end}=75$, exchange stepwidth $H=0.2$, subsystems refinement decisions left to subsystems solvers, stable \emph{vode} used on subsystems.   Power balanced scheme is stable and conserves energy. }
\end{figure}
\subsection{Stability}
\begin{figure}
\includegraphics[ width=0.3\textwidth ,trim={10cm  0cm  0cm 6cm},clip]
{./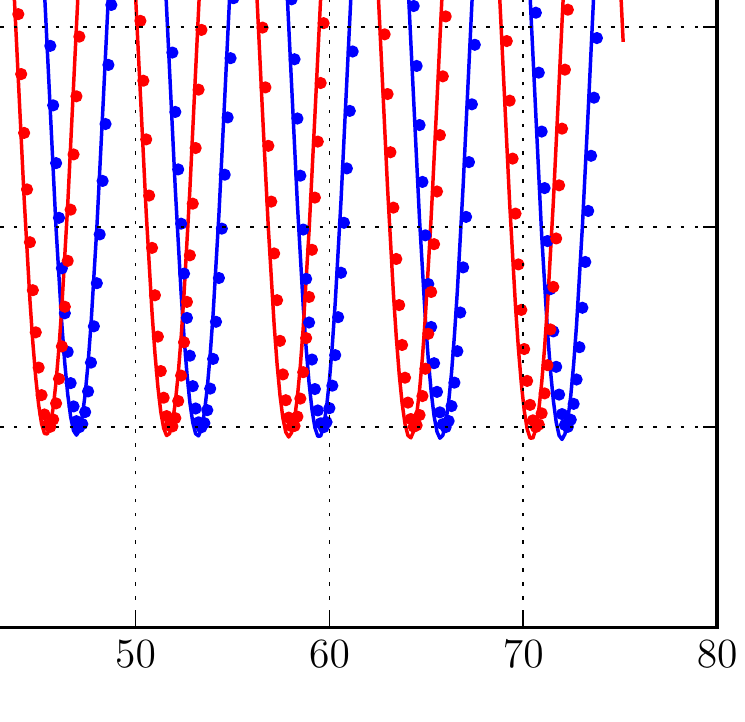}
\includegraphics[ width=0.3\textwidth ,trim={10cm  0cm  0cm 6cm},clip]
{./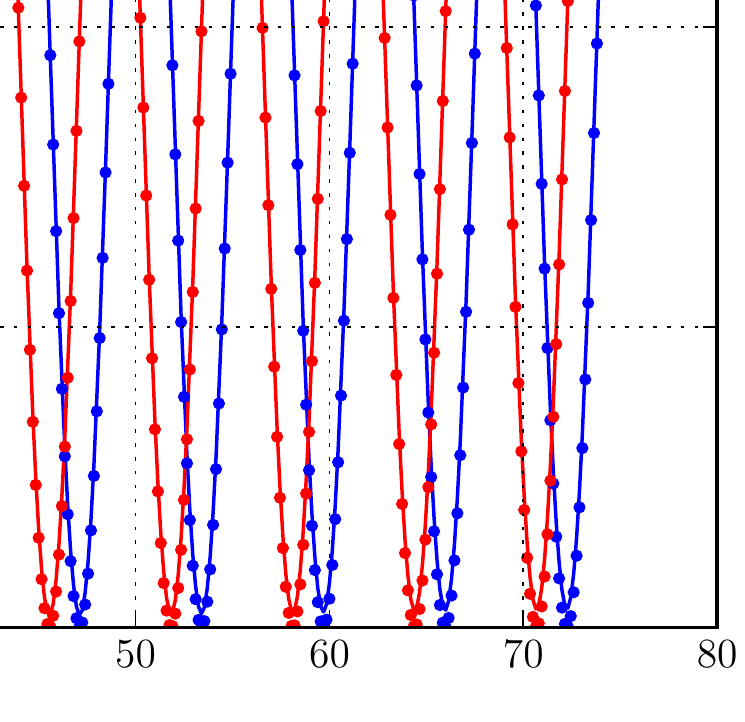}
\includegraphics[ width=0.3\textwidth ,trim={10cm  0cm  0cm 6cm},clip]
{./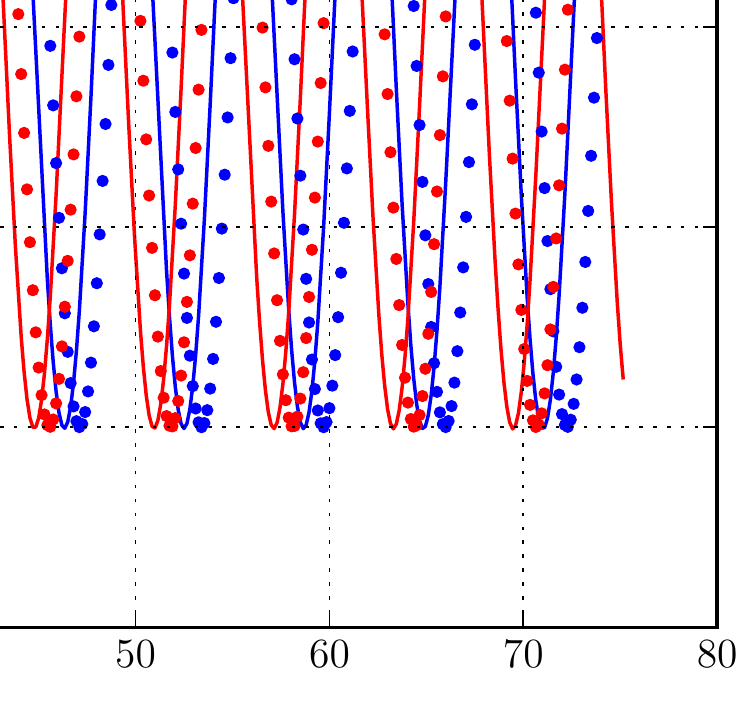}
\caption{\label{fig:stability_h15} Numerical setting as for figure \ref{fig:stability_h20}, but  exchange stepwidth $H=0.15$. Lower right corner is shown magnified to reveal amplitude.}
\end{figure}

The numerical examination reveals that stability in practice is, albeit given for bigger $H$ than for all other coupled methods, not global. It turns out that this is due to the difficulties when calculating the inverse of Power, which is treated in section \ref{inverseOfP}. There, it is also shown that without that phenomenon, the energy of the system would be conserved.\\
Figure \ref{fig:stability_h20} shows  the solution of the spring-mass system simulated using $H=0.2$ and linear extrapolation without and with balance correction and (right) the power balanced scheme. The latter one is the only one that conserves the energy. This still holds for $H=0.15$, see Figure \ref{fig:stability_h15}. For smaller $H$ all methods become stable.
\subsection{Pitfalls}
\subsubsection{Inversion of $\hat P$}\label{inverseOfP}
As mentioned in \ref{exampleNegPower}, the inverses $P_i^{-(\vec u_{ij})_j}$
have to be calculated, which e.g. means finding $(u_{\vec 12,\text{Std}})_j$ such that
\begin{equation}\label{inverseOfPower}
 \operatorname{Ext}\hat P(t) 
= P_{21}(\vec x_1, (\vec u_{12})_1, ...(\vec u_{12})_{n-1},(\vec u_{12,\text{Std}})_n)(t)
\end{equation}
 - there, e.g. $P_1^{-v}    =\frac{\operatorname{Ext}(\hat P)}{cs}$ and $P_2^{-f}  =\frac{\operatorname{Ext}(\hat P)}{v}$. For exact values, those are well defined and bounded as $P\longrightarrow 0$ if $s\longrightarrow 0$ and  if $v\longrightarrow 0$. On a computer, they might be undefined or arbitrary large.  One has to switch to d'Hopitals rule for calculation near denominators zeros.\\
 \begin{figure}[h!tb]
\includegraphics[ width=0.3\textwidth ]
{./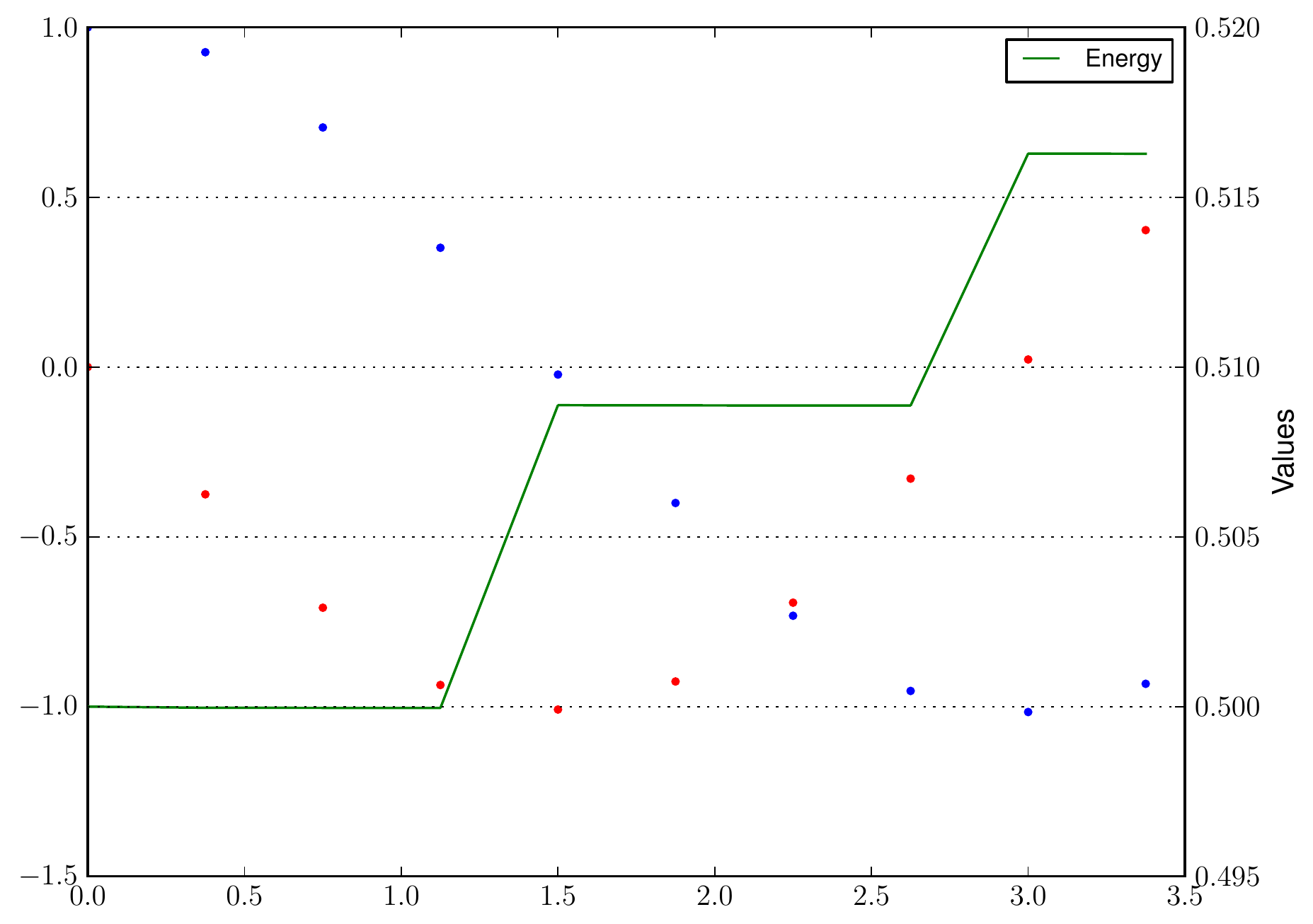}
\includegraphics[ width=0.3\textwidth ]
{./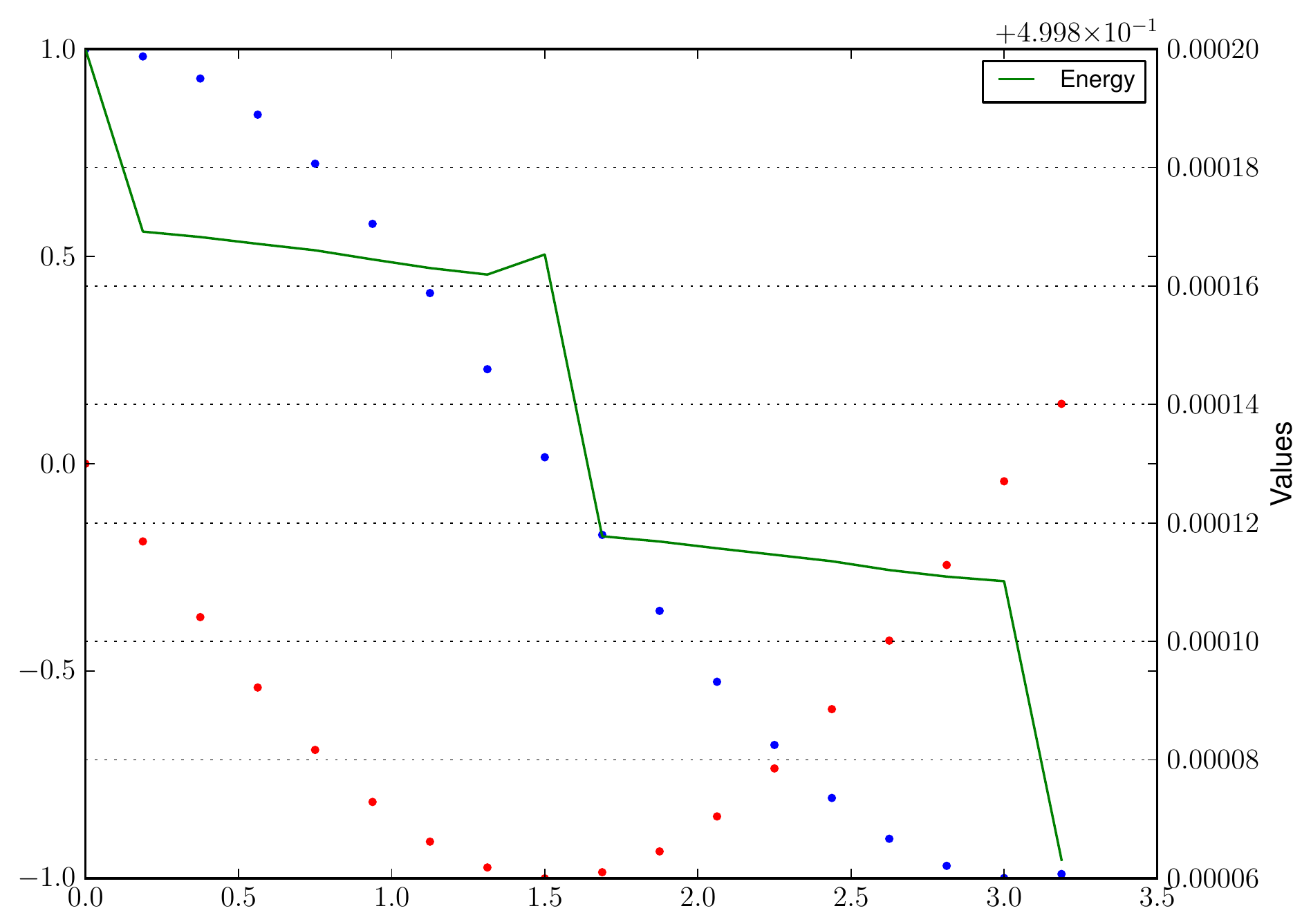}
\includegraphics[ width=0.3\textwidth ]
{./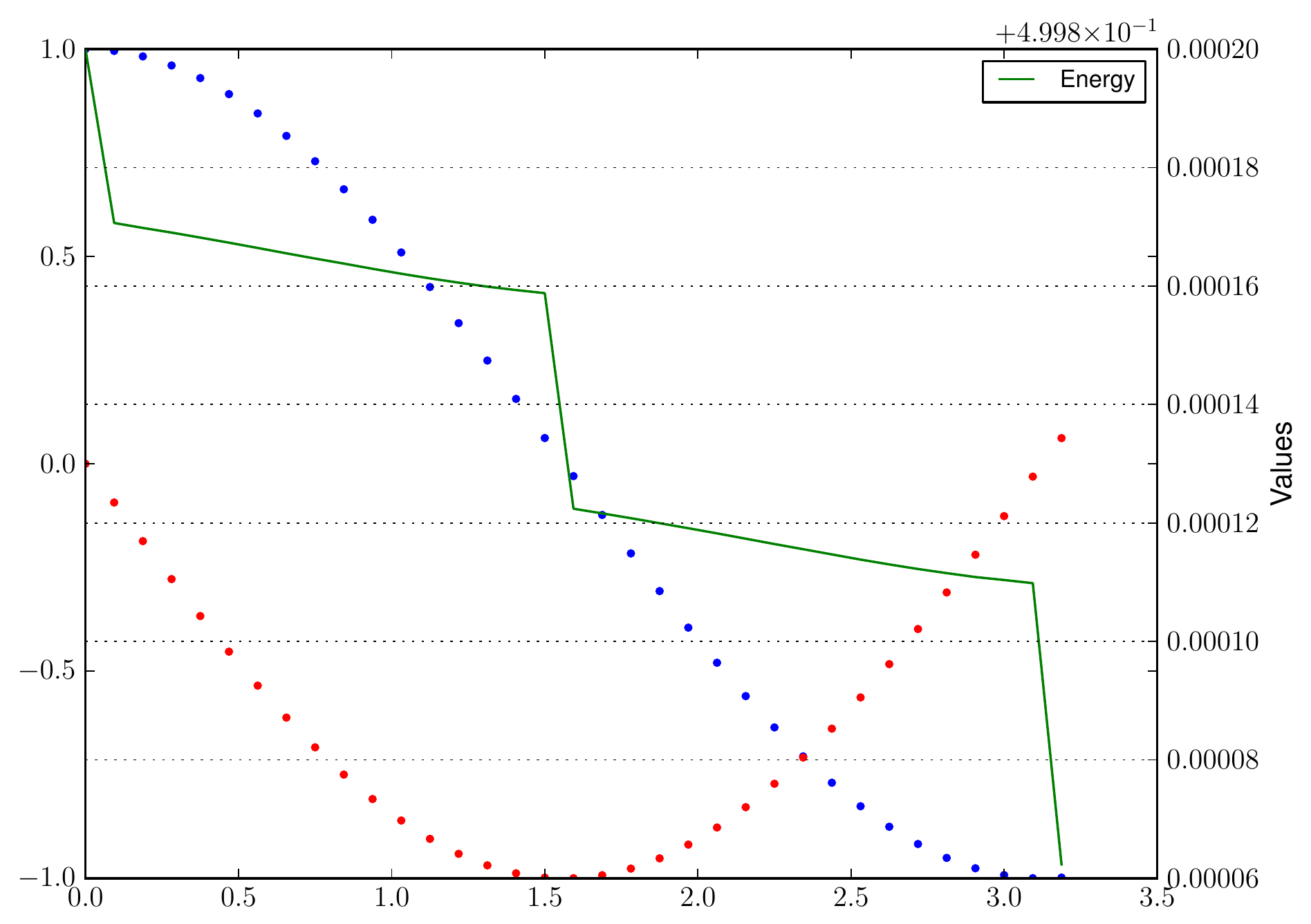}
\caption{\label{energyProductionAtZeros} Energy production when denominator of inverse of power crosses zero for $H=0.375s$,  $H=0.1825s$, $H=0.09125s$, from left to right. The plots show that zero crossings produce or consume energy, while energy is conserved away from them.}
\end{figure}
It turns out that the effect of the errors  committed during solving \eqref{inverseOfPower} near zeros of the denominator  is acting as an energy source for bigger step sizes and such is threatening stability,  even if solved with d'Hopitals rule. Away from those zeros, energy of the system is conserved, and for systems where such divisions do not appear there will be no such unphysical sources. Furthermore, even for our case the power balanced method is stable for much bigger stepsizes than the formerly discussed Cosimulation methods.



\section{Discussion, Conclusion and future work}
\label{sec-5}
The suggested method establishes balance of energy possibly at the cost of other balances. If we think of a system that is highly damped, this does not make sense. But if a system is undamped, the method enables applying cosimulation methods and implements an explicit but stable method. Moreover, the method has a clear interpretation in physics and can be implemented by anyone with understanding of the systems he wants to couple. For simulations in industrial research and  development, the new method is a big step forward. \\
 A future task could be to design explicit methods whose extrapolation is such that a variable connected to stability -- as energy -- is conserved.

\section{*Acknowledgments}
The author thanks Dirk Scharff for putting up the questions that led to this work.

\bibliography{../paper/ifacconf}

\begin{thebibliography}{10}
\newcommand{\noopsort}[1]{}
\newcommand{\printfirst}[2]{#1}
\newcommand{\singleletter}[1]{#1}
\newcommand{\switchargs}[2]{#2#1}
\providecommand{\url}[1]{\normalfont{#1}}
\providecommand{\urlprefix}{Available at }

\bibitem{ArnoldGuenther2001}
M. Arnold and M. G\"unther, \emph{Preconditioned dynamic iteration for coupled
  differential-algebraic systems}, BIT Numerical Mathematics 41 (2001), pp.
  1--25, \urlprefix\url{http://dx.doi.org/10.1023/A3A1021909032551}.

\bibitem{ArnoldClaussSchierz2013}
M. Arnold, C. Clauss, and T. Schierz, \emph{{E}rror {A}nalysis and {E}rror
  {E}stimates for {C}o-{S}imulation in {FMI} for {M}odel {E}xchange and
  {C}o-{S}imulation v2.0} 60.1 (2013), pp. 75--94,
  \urlprefix\url{doi:10.2478/meceng-2013-0005}.

\bibitem{FMI}
M. Arnold, C. Bausch, T. Blochwitz, C. Clauß, M. Monteiro, T. Neidhold, J.V.
  Peetz, and S. Wolf, \emph{Functional Mock-up Interface for Co-Simulation}
  (2010),
  \urlprefix\url{https://svn.modelica.org/fmi/branches/public/specifications/v1.0/FMI_for_CoSimulation_v1.0.pdf}.

\bibitem{Busch2012}
M. Busch, \emph{Zur effizienten {K}opplung von {S}imulationsprogrammen}, Ph.D.
  thesis,  2012, \urlprefix\url{http://books.google.de/books?id=0qBpXp-f2gQC}.

\bibitem{KosselDiss}
R. Kossel, \emph{Hybride {S}imulation thermischer {S}ysteme am {B}eispiel eines
  {R}eisebusses}, Ph.D. thesis, Braunschweig, Techn. Univ.,  2012.

\bibitem{Scharff12}
D. Scharff, C. Kaiser, W. Tegethoff, and M. Huhn, \emph{Ein einfaches
  {V}erfahren zur {B}ilanzkorrektur in {K}osimulationsumgebungen}, in
  \emph{SIMVEC - Berechnung, Simulation und - Erprobung im Fahrzeugbau}, 2012.

\bibitem{ScharffMoshagen2017}
D. Scharff, T. Moshagen, and J. Vond\v{r}ejc, \emph{Treating {S}moothness and
  {B}alance during {D}ata {E}xchange in {E}xplicit {S}imulator {C}oupling or
  {C}osimulation}  (2017), p.~30,
  \urlprefix\url{https://arxiv.org/abs/1703.05522}.

\bibitem{Moshagen2017}
T. Moshagen, \emph{Convergence of explicitely coupled {S}imulation {T}ools
  (cosimulations)}, International Journal for Numerical Methods in Engineering
  (2017), pp. 27, to appear.

\bibitem{MoshDiff2011}
T. Moshagen, \emph{Diffuse {G}renzfl{\"a}chen thermodynamisch scharf - ein voll
  physikalisch eingebettetes {M}ultiphasenfeldmodell}, Ph.D. thesis,
  Universit{\"a}t Bremen, Zentrum f{\"u}r Technomathematik,  2011,
  \urlprefix\url{http://nbn-resolving.de/urn:nbn:de:gbv:46-00101865-17},
  {O}nline-Ressource (PDF: 208 S., 22,5 MB).

\bibitem{vanDerSchaft06}
A.V.D. Schaft, \emph{Port-{H}amiltonian systems: an introductory survey}
  (2006),
  \urlprefix\url{http://www.icm2006.org/proceedings/Vol_III/contents/ICM_Vol_3_65.pdf}.

\bibitem{vanDerSchaftMaschke03}
A. van~der  Schaft and B.M. Maschke, \emph{Port-hamiltonian systems: a theory
  for modeling, simulation and control of complex physical systems} (2003),
  \urlprefix\url{http://www-lar.deis.unibo.it/euron-geoplex-sumsch/files/lectures_1/Van\%20Der\%20Schaft/VDSchaft_01_PCHS.pdf}.

\bibitem{DeuflhardBornemann94}
P. Deuflhard and F.A. Bornemann, \emph{Numerische Mathematik II}, de ~Gruyter,
  1994.

\end{thebibliography}
\bibliographystyle{nMCM}

\end{document}